\DeclarePairedDelimiter{\ceil}{\lceil}{\rceil}
\DeclarePairedDelimiter\floor{\lfloor}{\rfloor}
\newcommand{\lebn}
\theoremstyle{plain}
\newtheorem{prop}[equation]{Proposition}
\newtheorem{thm}[equation]{Theorem}
\newtheorem{conj}[equation]{Conjecture}
\newtheorem{cor}[equation]{Corollary}
\newtheorem{lem}[equation]{Lemma}
\theoremstyle{definition}
\numberwithin{equation}{section}
\newcommand{\D}{\Delta}
\tikzset{nodc/.style={circle,draw=blue!50,fill=pink!80,inner sep=1.6pt}}
\tikzset{nodr/.style={circle,draw=black,fill=green!50!black,inner sep=2pt}}
\tikzset{nodel/.style={circle,draw=black,inner sep=2.2pt}}
\tikzset{nodinvisible/.style={circle,draw=white,inner sep=2pt}}
\tikzset{nodpale/.style={circle,draw=gray,fill=gray,inner sep=1.6pt}}
\tikzset{nod1/.style={circle,draw=black,fill=black,inner sep=1pt}}
\tikzset{nod2/.style={circle,draw=black,fill=blue!75!black,inner sep=1.6pt}}
\tikzset{nod3/.style={circle,draw=black,fill=black,inner sep=1.8pt}}
\tikzset{noddiam/.style={diamond,draw=black,inner sep=2pt}}
\tikzset{nodw/.style={circle,draw=black,inner sep=1.8pt}}
 \def\@textbottom{\vskip \z@ \@plus 10pt}
 \let\@texttop\relax
\begin{document}

\bibliographystyle{plain}

\title[2-distance coloring of planar graphs with girth five]{On 2-Distance ($\D+4$)-coloring of planar graphs with girth at least five}  

\author{Zakir Deniz}
\thanks{The author is supported by T\" UB\. ITAK, grant no:122F250}
\address{Department of Mathematics, Duzce University, Duzce, 81620, Turkey.}
\email{zakirdeniz@duzce.edu.tr}

\date{\today}
\thanks{}

\begin{abstract}
A vertex coloring of a graph $G$ is called a 2-distance coloring if any two vertices at distance at most $2$ from each other receive different colors. Let $G$ be a planar graph with girth at least $5$.
We prove that $G$ admits a $2$-distance coloring with $\D+4$ colors if $\D\geq 22$. 


\end{abstract}
\maketitle

\vspace*{-1.5em}

\section{Introduction}

All graphs in this paper are assumed to be simple, i.e., finite and undirected, with no loops or multiple edges. We refer to \cite{west} for terminology and notation not defined here. Let $G$ be a graph, we use $V(G),E(G),F(G),\D(G)$ and $g(G)$ to denote the vertex, edge and face set, the maximum degree and girth of $G$, respectively. If there is no confusion in the context, we abbreviate $\D(G),g(G)$ to $\D,g$. 
A 2-distance coloring is a vertex coloring where two vertices that are adjacent or have a common neighbour receive different colors, and the smallest number of colors for which $G$ admits a 2-distance coloring is known as the 2-distance chromatic number $\chi_2(G)$ of $G$.  A detailed survey on 2-distance coloring and related types of coloring is provided by Cranston \cite{daniel}. 

In 1977, Wegner \cite{wegner} posed the following conjecture.

\begin{conj}\label{conj:main}
For every planar graph $G$, $\chi_2(G) \leq 7$ if $\Delta=3$, $\chi_2(G) \leq \Delta+5$ if $4\leq \Delta\leq 7$, and $\chi_2(G) \leq  \floor[\big]{\frac{3\Delta}{2}}+1$ if $\Delta\geq 8$, 
\end{conj}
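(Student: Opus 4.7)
The statement to be addressed is Wegner's 1977 conjecture, which has remained open for nearly half a century, so any genuine proof proposal must be understood as a research program rather than a routine write-up. I would attack it through the now-standard framework of \emph{reducible configurations plus discharging}, since this has produced the strongest partial results in the literature, while being honest about exactly where the framework stalls.

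The plan is as follows. First I would pass to a hypothetical minimal counterexample $G$, minimal with respect to $|V(G)|+|E(G)|$, and reformulate the problem as bounding $\chi(G^2)$, the chromatic number of the square of $G$. The case $\Delta=3$ would be handled separately: here $G^2$ has maximum degree at most $6$, and one aims to classify by direct inspection the small planar exceptions that force a $7$th color. For $4\leq \Delta\leq 7$ the bound $\Delta+5$ is extremely tight and the range is finite, so I would combine a list-coloring argument (reserving colors for an uncolored vertex of maximum degree) with a careful case analysis over the possible local structures around a vertex of degree $\Delta$, possibly computer-assisted. The main target is $\Delta\geq 8$, governing the conjectured asymptotic $\lfloor 3\Delta/2\rfloor+1$.

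For this last range I would identify a family of reducible configurations --- local substructures whose presence allows one to color $G-H$ by minimality for a small $H$ and then extend the coloring by a greedy or Hall-type argument on the uncolored vertices, using that fewer than $\lfloor 3\Delta/2\rfloor+1$ colors are forbidden at each of them. I would then assign initial charges $d(v)-4$ to each vertex $v$ and $2\ell(f)-4$ to each face $f$ of length $\ell(f)$, so that by Euler's formula the total charge is $-8$. Discharging rules would send charge from high-degree vertices and long faces toward the sparse regions; in the absence of every reducible configuration, every element would end with nonnegative final charge, contradicting the negative total and proving that no minimal counterexample exists.

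The hard part --- and the reason the conjecture is still open --- is that the tight bound $\lfloor 3\Delta/2\rfloor+1$ forces one to argue about the \emph{second}-neighborhood structure of a $\Delta$-vertex, not merely about its immediate neighbors. The current best general bound, Molloy-Salavatipour's $\lceil 5\Delta/3\rceil+O(1)$ and its refinements, falls short precisely because purely local discharging cannot detect the extremal configuration in which a vertex of degree $\Delta$ is surrounded by neighbors of degree roughly $\Delta/2$ whose second-neighborhoods are almost disjoint. Closing the asymptotic gap between $\lceil 5\Delta/3\rceil$ and $\lfloor 3\Delta/2\rfloor+1$ appears to require a genuinely new ingredient: perhaps a fractional or probabilistic relaxation adapted to planarity, or an auxiliary edge-packing on $G$ whose color classes are fed into the coloring of $G^{2}$. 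Without such a global ingredient, the reducibility-plus-discharging toolbox alone does not seem strong enough, which is why results such as the $\Delta+4$ bound under the extra hypothesis $g(G)\geq 5$ and $\Delta\geq 22$ (the actual content of this paper) represent the frontier of what is currently provable.
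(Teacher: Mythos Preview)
The statement you were asked to address is Wegner's conjecture, which the paper explicitly labels as a \texttt{conj} environment and does \emph{not} prove; it is cited only as motivation, with the paper's actual contribution being Theorem~\ref{thm:main}. There is therefore no ``paper's own proof'' to compare your attempt against.

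You recognize this correctly: your write-up is not a proof but an honest survey of the discharging-plus-reducibility paradigm, together with an accurate diagnosis of why it stalls short of the conjectured $\lfloor 3\Delta/2\rfloor+1$ bound (the need to control second neighbourhoods, the Molloy--Salavatipour $\lceil 5\Delta/3\rceil+O(1)$ barrier). Your closing remark that the $\Delta+4$ bound under $g\geq 5$, $\Delta\geq 22$ is the actual content of the paper is exactly right. One minor inaccuracy: the $\Delta=3$ case is no longer open --- Thomassen (and independently Hartke, Jahanbekam, and Thomas) settled it, as the paper itself notes --- so the ``small planar exceptions'' programme you sketch there is already complete. Otherwise there is nothing to correct, because there is nothing to prove.
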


Wegner also constructed a graph to show that the upper bound for the case $\D\geq 8$ is indeed the best possible. While the conjecture remains widely open, there have been some partial solution. Thomassen \cite{thomassen} (independently by Hartke et al. \cite{hartke}) proved the conjecture for planar graphs with $\Delta = 3$. Moreover, there are upper bounds for $2$-distance chromatic number of planar graphs. For instance, van den Heuvel and McGuinness \cite{van-den} showed that $\chi_2(G) \leq 2\Delta + 25$, and Molloy and Salavatipour \cite{molloy}  established the bound $\chi_2(G) \leq \ceil[\big]{ \frac{5\D}{3}}+78$. \medskip


For planar graphs with girth restrictions, La and Montassier \cite{la-mont-2022} provided a concise overview of the most recent findings. For instance, Bu and Zhu \cite{bu-zu} showed that $\chi_2(G) \leq \Delta + 5$ if $G$ is a planar graph with $g \geq  6$, which confirms the Conjecture \ref{conj:main} for the planar graphs with girth six. This result was further refined to $\chi_2(G) \leq \Delta + 4$ when $\Delta\geq 6$ in \cite{deniz-g6}. Additionally, La \cite{la-2021} proved that $\chi_2(G) \leq \Delta + 3$ under two conditions: either $g \geq 7$ and $\Delta \geq 6$, or $g \geq 8$ and $\Delta \geq 4$.

When $G$ is a planar graph with a girth of at least $5$, Dong and Lin \cite{dong} proved that $\chi_2(G) \leq \Delta + 8$. This result was later improved in \cite{dong-lin-2017} to $\chi_2(G) \leq \Delta + 7$ when $\D\notin \{7,8\}$. Recently, Deniz \cite{deniz-g5} extended this results as $\chi_2(G) \leq \Delta + 7$ for every $\D$. Dong and Lin \cite{dong-lin-2017} also showed that $\chi_2(G) \leq \Delta + 4$ if $\D\geq 150$. More recently, Jin and Miao \cite{jin-miao-2022} proved that $\chi_2(G) \leq \Delta + 4$ if $\D\geq 40$, which is a corollary of a $2$-distance list coloring. \medskip


We focus on $2$-distance coloring of the planar graphs with girth at least $5$, and we improve a result of Dong and Lin \cite{dong-lin-2017} and partially extend the result of Jin and Miao \cite{jin-miao-2022}  as follows. 



\begin{thm}\label{thm:main}
If $G$ is a planar graph with  $g\geq 5$ and $\Delta\geq 22$, then $\chi_2(G) \leq \Delta + 4$. 
\end{thm}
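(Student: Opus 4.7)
The plan is to argue by contradiction via a minimum counterexample. Let $G$ be a planar graph with $g(G)\geq 5$, $\D(G)\geq 22$, and $\chi_2(G)>\D+4$, chosen so that $|V(G)|+|E(G)|$ is minimum. The proof will combine structural reductions with a plane discharging argument on the embedded graph.

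The first phase is to extract structural properties of $G$. Because $g\geq 5$, for any vertex $v$ we have $|N_2(v)|=d(v)+\sum_{u\in N(v)}(d(u)-1)$, since there are neither triangles nor $4$-cycles through $v$. Hence a $(\D+4)$-coloring of $G-v$ extends to $v$ whenever this quantity is at most $\D+3$, which by minimality of $G$ rules out such vertices. From this basic weight inequality I would successively derive more refined reducible configurations: $\delta(G)\geq 2$; each $2$-vertex has the sum of its two neighbor degrees at least $\D+4$; the number of $2$-vertices sharing a common neighbor is tightly bounded; and $3$- or $4$-vertices cannot have too many small-degree neighbors. Each reduction removes a small subgraph, recolors the remainder by minimality, and extends the coloring by a careful count of forbidden colors, sometimes combined with an exchange argument that recolors one or two troublesome distance-$2$ neighbors.

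The second phase is discharging. Assign $\mu(v)=d(v)-4$ to each vertex and $\mu(f)=d(f)-4$ to each face; then by Euler's formula $\sum_v\mu(v)+\sum_f\mu(f)=-8$, and the hypothesis $g\geq 5$ guarantees that every face has non-negative initial charge. Only $2$- and $3$-vertices carry a deficit, of $-2$ and $-1$ respectively. I would design rules under which a $k$-vertex with large $k$ -- note that a $\D$-vertex has excess at least $18$ -- sends an appropriate amount of charge to each incident small neighbor, and also to small second-neighbors when routed through a common big vertex, with auxiliary transfers to handle $3$-vertices. The reducible configurations established in the first phase force every small vertex to be anchored to at least one big vertex whose excess can comfortably absorb its deficit, so every final charge is non-negative, contradicting the Euler total of $-8$.

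The main obstacle will be calibrating the reducible configurations and the discharging rules so that the argument goes through all the way down to $\D\geq 22$, tightening the $\D\geq 40$ threshold of Jin and Miao. The delicate point is the treatment of small vertices clustered around a common big vertex: one must prove a forbidden-clustering lemma that limits how many small neighbors a single big vertex can carry, so that its excess cannot be overwhelmed. Establishing this clustering lemma -- typically via a local recoloring or greedy counting argument on the palette of available colors at the small vertices -- is where the bulk of the technical work will lie.
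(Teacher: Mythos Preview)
Your outline has two concrete gaps that would prevent the argument from closing.

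First, with the weights $\mu(v)=d(v)-4$ and $\mu(f)=d(f)-4$ you cannot hope to leave the face charges untouched and redistribute only among vertices. Since $g\geq 5$, every face contributes at least~$1$, so the total vertex charge is $-8-\sum_f(\ell(f)-4)\leq -8-|F|$; no vertex-to-vertex redistribution can make all vertex charges non-negative. With this weighting you would be \emph{forced} to pull substantial charge out of faces, yet your plan never mentions face-to-vertex rules. Concretely, a $\Delta$-vertex has excess $\Delta-4$, strictly less than~$1$ per neighbour, so even two $\Delta$-vertices sharing a common $2$-vertex cannot together supply the $2$ units it needs.

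Second, the ``forbidden-clustering lemma'' you propose --- limiting how many small neighbours a big vertex can carry --- does not exist. A $\Delta$-vertex $v$ may have all $\Delta$ neighbours of degree~$2$, each of whose other neighbour is again a $\Delta$-vertex; this configuration has $D(v)=2\Delta\geq \Delta+4$ and is not reducible by any local recoloring. So the bottleneck you identify cannot be removed structurally; it has to be absorbed by the discharging.

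The paper sidesteps both issues by a different calibration. It uses $\mu(v)=\tfrac{3}{2}d(v)-5$ and $\mu(f)=\ell(f)-5$, so that every $10^+$-vertex already has at least~$1$ per neighbour to give (rule~R7), and $5$-faces start at~$0$. Rather than a clustering lemma, it introduces a \emph{light/heavy} dichotomy (a vertex $v$ is light if $D(v)<\Delta+4+n_2^3(v)$) and proves that every neighbour of a light vertex is heavy; this lets charge be routed from distance-$2$ big vertices through $2$-vertices to the deficient vertex. The genuinely delicate cases --- $5$- and $6$-vertices with almost all neighbours of degree~$2$ --- are handled not by forbidding them but by two face rules: adjacent $10^+$-vertices deposit charge into incident faces (R8), and faces then pass their surplus to incident ``poor'' vertices on a $2$--$5/6$--$2$ path (R9). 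These face transfers, together with careful case analysis of weak neighbours of $5(4)$-vertices (rules R4--R5), are what push the threshold down to $\Delta\geq 22$.
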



Given a planar  graph $G$, 
we denote by $\ell(f)$ the length of a face $f$ and by $d(v)$ the degree of a vertex $v$. 
A $k$-vertex is a vertex of degree $k$. A $k^{-}$-vertex is a vertex of degree at most $k$, and a $k^{+}$-vertex is a vertex of degree at least $k$.  A $k$ ($k^-$ or $k^+$)-face is defined similarly. A vertex $u\in N(v)$ is called $k$-neighbour (resp. $k^-$-neighbour, $k^+$-neighbour) of $v$ if $d(u)=k$ (resp. $d(u)\leq k$, $d(u)\geq k$). A $k(d)$-vertex is a $k$-vertex adjacent to $d$ $2$-vertices. 

For a vertex $v\in V(G)$, we use $n_i(v)$ (resp. $n_2^3(v)$) to denote the number of $i$-vertices (resp. $2$-vertices having a $3$-neighbour) adjacent to $v$. Let $v\in V(G)$, we define $D(v)=\Sigma_{v_i\in N(v)}d(v_i)$. For $u,v\in V(G)$, we denote by $d(u,v)$ the distance between $u$ and $v$. 

For a path $uvw$ with $d(v)=2$, we say that $u$ and $v$ are \emph{weak-adjacent}. In particular, a pair of weak-adjacent vertices are called \emph{weak neighbour} of each other. 


\section{The Proof of Theorem \ref{thm:main}} \label{sec:premECE}

\subsection{The Structure of Minimum Counterexample} \label{sub:premECE}~~\medskip

Let $G'$ be a counterexample to Theorem \ref{thm:main}, i.e., $\D(G')=\D\geq 22$, $g(G')\geq 5$ and $\chi_2(G)> \D+4$, and let $G$ be a graph with minimum $|V(G)\cup E(G)|$ such that $\D(G)\leq \D$, $g(G)\geq g(G')$ and $\chi_2(G)> \D+4$. The set of graphs with these properties is non-empty, as $G'$ satisfies all of them. 
We will prove that $G$ does not exist, which contradicts the assumption that $G'$ exists.
Notice that, by minimality, $G-x$ has a 2-distance coloring with $\D+4$ colors for every $x\in V(G)\cup E(G)$. Obviously,  $G$ is connected and $\delta(G)\geq 2$. \medskip



We begin with introducing the concept of light and heavy vertices. A vertex $v$ is called \emph{light} if $D(v)<\D+4+n_2^3(v)$.  Denote by $n^l(v)$ the number of light vertices adjacent to $v$.
If $v$ is a vertex with $D(v)\geq \D+4+n^l(v)$, then it is called \emph{heavy}. It is worth noting that  every $2$-vertex adjacent to a $3$-vertex is a light vertex, so $n_2^3(v)\leq n^l(v)$ for any vertex $v$.

Let us now outline some structural properties that $G$ must carry, which will be  in use in the sequel. 



\begin{prop}\label{prop:non-heavy}
Let $v$ be a vertex in $G$ with $D(v)< \D+4+n^l(v)$. If $G$ has a 2-distance coloring $f$ with $\D+4$ colors such that $v$ is an uncolored vertex, then $v$ always has an available color. 
\end{prop}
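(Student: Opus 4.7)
The plan is to count the colours forbidden at $v$ and exhibit an available one, enlisting the light neighbours to recover the deficit. Using the girth-$5$ assumption, no two neighbours of $v$ can share a common neighbour other than $v$ itself (else a $4$-cycle would appear), so the sets $N(u)\setminus\{v\}$ for $u\in N(v)$ are pairwise disjoint and disjoint from $N(v)\setminus\{u\}$. Writing $N_2(v)$ for the set of vertices at distance exactly $2$ from $v$, this gives
\[
|N(v)\cup N_2(v)|=d(v)+\sum_{u\in N(v)}(d(u)-1)=D(v),
\]
so the set of colours forbidden at $v$, namely $\{f(w):w\in N(v)\cup N_2(v)\}$, has size at most $D(v)$. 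If $D(v)\leq \D+3$ an available colour exists immediately, so the interesting case is $D(v)\geq \D+4$; the hypothesis then forces $n^l(v)\geq D(v)-\D-3\geq 1$, i.e.\ $v$ has at least one light neighbour.

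Let $u_1,\dots,u_k$, with $k=n^l(v)$, enumerate the light neighbours of $v$. Since any two of them lie in $N(v)$ and are pairwise at distance $2$ through $v$, their colours $f(u_1),\dots,f(u_k)$ are pairwise distinct. Temporarily remove the colours from $u_1,\dots,u_k$; the remaining forbidden palette at $v$ has size at most
\[
|(N(v)\cup N_2(v))\setminus\{u_1,\dots,u_k\}|=D(v)-n^l(v)<\D+4,
\]
so at least one colour in $\{1,\dots,\D+4\}$ is available for $v$, and we can assign it.

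It then remains to recolour each $u_i$ and restore a proper $2$-distance colouring. Since $u_i$ is light, $D(u_i)\leq \D+3+n_2^3(u_i)$, so after $v$ and the other $u_j$ have been (re)assigned the number of colours available for $u_i$ is at least $2-n_2^3(u_i)$. When $n_2^3(u_i)=0$ this is automatic. The main obstacle is the case $n_2^3(u_i)\geq 1$: here I would additionally uncolour each $2$-vertex neighbour $w$ of $u_i$ that is adjacent to a $3$-vertex. Such a $w$ is itself a light vertex with $D(w)\leq \D+3$, so it can be recoloured at the end without conflict. Sequencing these uncolourings and recolourings correctly, using the definition of ``light'' and the sparse second-neighbourhoods guaranteed by $g\geq 5$, is the delicate step of the argument, but the core counting that gives $v$ an available colour is already captured by the displayed inequality above.
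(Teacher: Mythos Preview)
Your argument is correct and follows essentially the same strategy as the paper: decolour the light neighbours $S$ of $v$, use the freed slots to colour $v$, then restore the colouring on $S$, handling the $2$-vertices with a $3$-neighbour separately since for these $D(w)\le\Delta+3$. The only difference is organisational: the paper decolours exactly $S$, partitions it as $R\cup(S\setminus R)$ with $R$ the $2$-neighbours of $v$ having a $3$-neighbour, and recolours $S\setminus R$ before $R$; you instead decolour, for each light neighbour $u_i$, its own $2$-neighbours having a $3$-neighbour, recolouring those at the very end. Your extra decolouring makes the count $D(u_i)-n_2^3(u_i)<\Delta+4$ immediate, and the ``delicate'' sequencing you flag is in fact routine once one notes (as you do) that girth $5$ forces the relevant sets to be pairwise disjoint and disjoint from $S$.
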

\begin{proof}
Let $v$ be a vertex in $G$ with $D(v)< \D+4+n^l(v)$. Denote by $S$ the set of light vertices that are adjacent to $v$, and denote by  $R$ the set of all  $2$-neighbours of $v$ that are  adjacent to a $3$-vertex. Obviously,  we have $R\subseteq S$, also $n_2^3(v)=|R|$ and $n^l(v)=|S|$.  Consider a 2-distance coloring $f$ with $\D+4$ colors such that $v$ is an uncolored vertex, we decolor all vertices in $S$. Since $v$ has an available color as $D(v)< \D+4+n^l(v)$, we give an available color to it. On the other hand, we can easily recolor every vertex in $S$ because each of them is a light vertex. Indeed, since we decolor all vertices in $R$, each vertex in $S\setminus R$ has an available color. Therefore, we first recolor all vertices in $S\setminus R$, and then the vertices in $R$ with available colors.  Thus, we obtain a proper 2-distance coloring of $G$ with $\D+4$ colors.  
\end{proof}


Consider 2-distance coloring of $G$ with $\D+4$ colors, where a vertex $v$ is left uncoloured. One can easily infer from Proposition \ref{prop:non-heavy} that if $v$ is not heavy, then it always has an available color. We therefore have the following.

\begin{cor}\label{cor:light-ext}
If $v$ is a light vertex, then any 2-distance coloring $f$ with $\D+4$ colors such that $v$ is an uncolored vertex can always be extended to the whole graph.
\end{cor}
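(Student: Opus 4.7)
The plan is to derive this corollary essentially as a direct consequence of Proposition \ref{prop:non-heavy}, since the hypothesis "$v$ is light" is strictly stronger than the hypothesis used in that proposition. First, I would recall that by definition $v$ is light means $D(v)<\D+4+n_2^3(v)$, while the hypothesis of Proposition \ref{prop:non-heavy} is $D(v)<\D+4+n^l(v)$, and that the excerpt already records the key inequality $n_2^3(v)\leq n^l(v)$ (which follows because every $2$-vertex adjacent to a $3$-vertex is itself a light vertex by the definition of light).

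Given this, the core step is a one-line implication: since
\[
D(v)<\D+4+n_2^3(v)\leq \D+4+n^l(v),
\]
the vertex $v$ fulfills the assumption of Proposition \ref{prop:non-heavy}. Therefore, from any partial $2$-distance coloring $f$ of $G$ with $\D+4$ colors in which only $v$ is uncolored, Proposition \ref{prop:non-heavy} already provides a valid color for $v$, and inspecting its proof shows that this in fact produces a proper $2$-distance coloring of the whole graph (the recoloring procedure there decolors and recolors $S$ along with $v$, and finishes with a complete proper coloring).

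There is essentially no obstacle here: the only subtle point is keeping the quantifiers straight and making explicit the inclusion $R\subseteq S$ style reasoning already done in Proposition \ref{prop:non-heavy}. Thus I would write the corollary's proof as a brief citation of the proposition, followed by the inequality chain above, and conclude that the coloring extends to all of $G$.
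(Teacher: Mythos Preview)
Your proposal is correct and matches the paper's own treatment: the corollary is stated immediately after the remark that any non-heavy vertex (in particular any light vertex, via $n_2^3(v)\le n^l(v)$) satisfies the hypothesis of Proposition~\ref{prop:non-heavy}, and the paper gives no further argument. Your explicit inequality chain and your observation that the proof of Proposition~\ref{prop:non-heavy} in fact yields a complete proper coloring (not merely an available color for $v$) make the deduction slightly more transparent than in the paper.
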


We will now prove that every neighbour of a light vertex is a heavy vertex. This result will be used further to derive some forbidden configurations.


\begin{prop}\label{prop:light-heavy}
If $v$ is a light vertex in $G$, then each neighbour of $v$ is a heavy vertex.
\end{prop}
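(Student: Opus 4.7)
The plan is to argue by contradiction: suppose $v$ is light and some neighbour $u$ of $v$ is not heavy, so $D(u)<\D+4+n^l(u)$. I will construct a $2$-distance coloring of $G$ with $\D+4$ colors, contradicting the choice of $G$. To that end I would apply minimality to $H=G-uv$: since $\D(H)\le\D$ and $g(H)\ge g(G)\ge 5$, by minimality $H$ admits a $2$-distance coloring $f$ with $\D+4$ colors. Viewed as a partial coloring of $V(G)$, the only $G$-constraints $f$ may violate are on pairs joined by a length-$\le 2$ path through the deleted edge $uv$; since $g\ge 5$ forces $N(u)\cap N(v)=\emptyset$, any such pair has at least one endpoint in $\{u,v\}$.

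I would then uncolor $\{u\}\cup S_u$, where $S_u$ denotes the set of light neighbours of $u$ (note $v\in S_u$). The restriction of $f$ to the coloured vertices remains a valid partial $2$-distance coloring of $G$, since every problematic $G$-path through $uv$ has an endpoint in $\{u,v\}\subseteq\{u\}\cup S_u$ and is therefore uncoloured. Using $g\ge 5$, the number of vertices at $G$-distance at most $2$ from $u$ (other than $u$) equals $D(u)$; since all $n^l(u)$ members of $S_u\subseteq N(u)$ are uncoloured, at most $D(u)-n^l(u)<\D+4$ colors are forbidden at $u$, so $u$ admits an available colour.

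Only the vertices of $S_u$ remain uncoloured. I would recolour them in the order $S_u\setminus\{v\}$ first and $v$ last. Each vertex of $S_u$ is light, so for $s\in S_u\setminus\{v\}$ the argument of Proposition~\ref{prop:non-heavy} supplies an available colour at $s$; any still-uncoloured members of $S_u$ lie at $G$-distance $2$ from $s$ via $u$ (by $g\ge 5$) and their uncoloured status only shrinks the set of forbidden colors at $s$. After every vertex of $S_u\setminus\{v\}$ has been recoloured, only $v$ is uncoloured and the partial coloring is a valid $2$-distance coloring of $G$ on $V(G)\setminus\{v\}$; Corollary~\ref{cor:light-ext} then extends it to $v$, producing a $2$-distance coloring of $G$ with $\D+4$ colors---the sought contradiction.

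I expect the bookkeeping during the sequential recolouring to be the main obstacle: at every stage one must verify that the current partial coloring is a proper $2$-distance coloring of $G$ on the coloured vertices, and that the forbidden-color count for the next light vertex being recoloured remains strictly below $\D+4$ despite the cascading use of Proposition~\ref{prop:non-heavy} (whose internal recolouring phase may itself temporarily uncolour further light vertices). The girth hypothesis---responsible both for the count of distance-$2$ neighbours of $w$ equalling $D(w)$ and for the disjointness $N(u)\cap N(v)=\emptyset$---is what keeps the arithmetic going through.
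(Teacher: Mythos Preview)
Your proposal is correct and follows essentially the same approach as the paper: take a $2$-distance $(\Delta+4)$-coloring of $G-uv$, decolour $u$ together with its light neighbours $S_u\ni v$, recolour $u$ using $D(u)-n^l(u)<\Delta+4$, and then restore colours on $S_u$ via Corollary~\ref{cor:light-ext}. The bookkeeping worry you flag is not a real obstacle---the paper simply invokes Corollary~\ref{cor:light-ext} for each member of $S$ in turn, and since (by $g\ge 5$) no two members of $S_u$ are adjacent, the still-uncoloured ones lie at distance~$2$ from the current one and only loosen its constraint.
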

\begin{proof}
Let $v$ be a light vertex. Assume to the contrary that there exists $u\in N(v)$ such that it is not heavy, i.e., $D(u)< \D+4+n^l(u)$. By minimality,  $G-uv$ has a 2-distance coloring $f$ with $\D+4$ colors. Denote by $S$ the set of light vertices that are adjacent to $u$. Obviously, $v\in S$ and $|S|=n^l(u)$.  We first decolor all vertices in $S\cup \{u\}$. Now, $u$ has at most $\D+3$ forbidden colors since $D(u)< \D+4+n^l(u)$, so we recolor $u$ with an available color. On the other hand, each vertex in $S$  has an available color by Corollary \ref{cor:light-ext}, since they are light vertices. Therefore, we give an available color to each vertex of $S$ so that we obtain a proper 2-distance coloring with  $\D+4$ colors, a contradiction.
\end{proof}

The following can be easily obtained from Proposition \ref{prop:light-heavy}.

\begin{cor}\label{cor:properties}
\begin{itemize}
\item[$(a)$] $G$ has no adjacent $2$-vertices.
\item[$(b)$] A $3$-vertex cannot have two $2$-vertices, i.e.,  $G$ has no $3(k)$-vertex for $k\in \{2,3\}$.
\item[$(c)$] A $4$-vertex cannot have four $2$-vertices, i.e.,  $G$ has no $4(4)$-vertex.
\end{itemize}
\end{cor}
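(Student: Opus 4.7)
My plan is to derive all three parts uniformly from Proposition~\ref{prop:light-heavy} by the following mechanism: in each forbidden configuration I exhibit a low-degree vertex $v$ that is forced to be light, and then use a $2$-vertex neighbour $u$ of $v$ to derive a contradiction with the heaviness of $u$. The core inequality is the crude bound $D(u)\le d(v)+\D$, which gives $D(u)\le \D+4$ whenever $d(v)\le 4$, while heaviness of $u$ coupled with the fact that $v$ itself is a light neighbour (so $n^l(u)\ge 1$) demands $D(u)\ge \D+5$.

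For (a), if $uv$ is an edge with $d(u)=d(v)=2$ and $w$ is the other neighbour of $v$, then $D(v)=2+d(w)\le \D+2<\D+4\le \D+4+n_2^3(v)$, so $v$ is light, and the scheme applied to $u$ yields the contradiction.

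For (b), if $v$ is a $3(k)$-vertex with $k\in\{2,3\}$, then $D(v)\le \D+4$ while $n_2^3(v)\ge 2$ (since $v$ itself is a $3$-neighbour of each of its $2$-vertex neighbours), so $D(v)<\D+4+n_2^3(v)$ and $v$ is light; any $2$-neighbour $u$ of $v$ then satisfies $D(u)\le 3+\D<\D+5$, contradicting heaviness. For (c), if $v$ is a $4(4)$-vertex then $D(v)=8<\D+4\le \D+4+n_2^3(v)$ (using $\D\ge 22$), so $v$ is light; picking any of its four $2$-neighbours $u$, one has $D(u)\le 4+\D<\D+5$, again contradicting heaviness.

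I do not expect a substantive obstacle. Each case reduces to elementary arithmetic under $\D\ge 22$, and the only bookkeeping point is remembering that the definition of \emph{light} uses $n_2^3(v)$ rather than $n^l(v)$; but $n_2^3(v)\ge k$ in~(b) is immediate because $v$ itself has degree~$3$, and the bound is not needed at all in~(a) or~(c).
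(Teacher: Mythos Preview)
Your proposal is correct and follows exactly the route the paper indicates: the paper simply states that the corollary ``can be easily obtained from Proposition~\ref{prop:light-heavy}'' and gives no further detail, so your argument---show the central vertex $v$ is light, apply Proposition~\ref{prop:light-heavy} to force a $2$-neighbour $u$ to be heavy, then note $D(u)\le d(v)+\Delta<\Delta+5\le \Delta+4+n^l(u)$---is precisely the intended derivation, spelled out in full.
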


\begin{lem}\label{lem:3-vertex-properties} 
Let $v$ be a $3$-vertex.
\begin{itemize}
\item[$(a)$] If $v$ has a $2$-neighbour, then $v$ is a heavy vertex. 
\item[$(b)$] If $v$ is a light $3(0)$-vertex having a $3$-neighbour $x$, then $x$ has a $12^+$-neighbour.
\item[$(c)$] If $v$ is a $3(1)$-vertex, then it cannot be adjacent to any $3(1)$-vertex.
\end{itemize}
\end{lem}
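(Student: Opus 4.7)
For (a), let $w$ be a 2-neighbour of $v$. By Corollary \ref{cor:properties}(a), $w$ has no 2-neighbour and hence $n_2^3(w) = 0$; denoting $w$'s other neighbour by $w_2$, we have $D(w) = 3 + d(w_2) \leq \D + 3 < \D + 4 + n_2^3(w)$, so $w$ is light. Proposition \ref{prop:light-heavy} then forces every neighbour of $w$, and in particular $v$, to be heavy. For (b), let $x$ be the 3-neighbour of the light $3(0)$-vertex $v$ and denote the other two neighbours of $x$ by $a$ and $b$. Proposition \ref{prop:light-heavy} gives that $x$ is heavy, so $D(x) \geq \D + 4 + n^l(x)$; since $v$ is itself a light vertex adjacent to $x$, we have $n^l(x) \geq 1$. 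Substituting $D(x) = 3 + d(a) + d(b)$ yields $d(a) + d(b) \geq \D + 2 \geq 24$, hence $\max(d(a), d(b)) \geq 12$, giving $x$ a $12^+$-neighbour.

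\textbf{Plan for part (c).} I argue by minimality. Suppose for contradiction that two $3(1)$-vertices $v$ and $v'$ are adjacent; write $w, w'$ for their 2-neighbours, $u, u'$ for their third neighbours, and $w_2, w'_2$ for the other neighbours of $w$ and $w'$ respectively. The hypothesis $g \geq 5$ rules out every problematic coincidence or extra edge among these vertices (for instance $w = w'$, $u = u'$, $u = w_2$, $u' = w_2$, $u \sim u'$, $u \sim w_2$, $u' \sim w_2$, $w \sim w'$), since each such configuration would produce a $3$- or $4$-cycle containing the edge $vv'$.

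By the minimality of $G$, the graph $G - vv'$ admits a 2-distance $(\D+4)$-coloring $f$. I then uncolor the four vertices $v, v', w, w'$ and count. Using the distinctness just established, the number of coloured vertices at distance at most $2$ from $v$ in $G$ is bounded by $d(u) + 2 \leq \D + 2$, so the available list $L(v)$ satisfies $|L(v)| \geq 2$, and symmetrically $|L(v')| \geq 2$. The analogous count for $w$ is at most $d(w_2) + 1 \leq \D + 1$, giving $|L(w)| \geq 3$, and likewise $|L(w')| \geq 3$.

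The 2-distance constraint subgraph on $\{v, v', w, w'\}$ is $K_4$ minus the edge $ww'$, since $d(w, w') = 3$ in $G$. Because $|L(v)|, |L(v')| \geq 2$, I can pick $c \in L(v)$ and $c' \in L(v') \setminus \{c\}$ and colour $v, v'$ with these two distinct colours. The lists $L(w)$ and $L(w')$ each lose at most two colours in this process and still contain at least one colour, so $w$ and $w'$ can be coloured greedily. This extends $f$ to a 2-distance $(\D+4)$-coloring of $G$, contradicting the choice of $G$. The main technical obstacle is the girth-based bookkeeping in (c): one must confirm that none of the eight vertices $v, v', w, w', u, u', w_2, w'_2$ coincide and that no extra adjacencies tighten the forbidden-colour counts past $\D + 2$ at $v, v'$ or past $\D + 1$ at $w, w'$; once this is secured, the list-colouring extension is routine.
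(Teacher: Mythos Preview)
Parts (a) and (b) are correct and essentially identical to the paper's proof.

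For part (c) your route differs from the paper's. The paper deletes the edge between $v$ and its $2$-neighbour, then decolours every $2$-vertex having a $3$-neighbour (in particular both $w$ and $w'$), recolours $v$ alone (now with at most $\Delta+3$ forbidden colours, since $v'$'s $2$-neighbour has been erased), and finally restores all such $2$-vertices via Corollary~\ref{cor:light-ext}. You instead delete $vv'$ and recolour the four vertices $v,v',w,w'$ by an explicit list-counting argument. The paper's version is shorter because it reuses the light-vertex machinery; yours is more self-contained.

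There is, however, a genuine gap in your version. You claim $d(w,w')=3$ and hence that the constraint graph on $\{v,v',w,w'\}$ is $K_4$ minus $ww'$. But girth $\ge 5$ does \emph{not} exclude $w_2=w'_2$: the cycle $v\,w\,w_2\,w'\,v'\,v$ has length $5$. When $w_2=w'_2$ one has $d(w,w')=2$, the constraint graph becomes the full $K_4$, and with only $|L(w)|,|L(w')|\ge 3$ your greedy step ``colour $w$ and $w'$ independently'' can fail (after spending $c,c'$ on $v,v'$, the single remaining colours in $L(w)$ and $L(w')$ may coincide). The repair is immediate: if $w_2=w'_2$ then $w'$ is an uncoloured neighbour of $w_2$, so the forbidden count at $w$ drops to $d(w_2)$ and $|L(w)|\ge 4$, symmetrically $|L(w')|\ge 4$, and greedy colouring of $K_4$ in the order $v,v',w,w'$ goes through. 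Also, your closing remark that coincidences might ``tighten the forbidden-colour counts'' has the direction reversed --- coincidences can only shrink those counts; the actual risk is the extra mutual constraint between $w$ and $w'$.
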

\begin{proof}
$(a)$. The claim follows from Proposition \ref{prop:light-heavy} together with the fact that every $2$ vertex having a $3$-neighbour is a light vertex.\medskip

$(b)$. Suppose that $v$ is a light $3(0)$-vertex, and let $x$ be a $3$-neighbour of $v$. Denote by $y,z$  the other neighbour of $x$. By  Proposition \ref{prop:light-heavy}, $x$ must be heavy, and so $D(x)\geq \D+4+n^l(x)$. Then $d(y)+d(z)\geq 24$. This infer that $x$ has  a $12^+$-neighbour. \medskip

$(c)$. Assume to the contrary that $v$ is adjacent to a $3(1)$-vertex $u$.  Let $x$ be the $2$-neighbour of $v$.  By minimality,  $G-vx$ has a 2-distance coloring with $\D+4$ colors.  We first decolor all $2$-vertices adjacent to some $3$-vertices. Then $v$ has at most $\D+3$ forbidden colors since $u$ has a decolored $2$-neighbour, so we give an available color to $v$. We next recolor all $2$-vertices adjacent to some $3$-vertices by Corollary \ref{cor:light-ext}. We therefore obtain a proper 2-distance coloring of $G$ with $\D+4$ colors, a contradiction. 
\end{proof}

%

\begin{lem}\label{lem:4-vertex-properties}
Let $v$ be a $4$-vertex.
\begin{itemize}
\item[$(a)$] If $v$ has a $2$-neighbour, then $v$ is a heavy vertex.
\item[$(b)$] If $v$ has two $2$-neighbours $x_1,x_2$ and a $3(1)$-neighbour $x_3$, then each $x_i$ is a heavy vertex. In particular, $v$ has a $20^+$-neighbour.
\item[$(c)$] If $v$ has three $2$-neighbours $x_1,x_2,x_3$, then each $x_i$ is a heavy vertex.
\end{itemize}
\end{lem}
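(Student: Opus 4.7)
I would prove each part by contradiction, invoking the minimality of $G$ together with Propositions~\ref{prop:non-heavy} and \ref{prop:light-heavy} and Corollary~\ref{cor:light-ext}. The template mirrors the previous lemmas in the paper: delete a carefully chosen vertex or edge, extract a $2$-distance coloring of the smaller graph from minimality, and then extend it to all of $G$ by uncoloring and recoloring vertices in a judicious order, using Proposition~\ref{prop:non-heavy} on the ``critical'' vertex and Corollary~\ref{cor:light-ext} to handle the light vertices that have been uncolored along the way.

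For part (a), suppose $v$ is a $4$-vertex with a $2$-neighbour $u$ and that $v$ is not heavy; let $w$ be the other neighbour of $u$. By minimality, $G-u$ admits a $2$-distance coloring $f$ with $\Delta+4$ colors. Uncolor $v$ in $f$. Since $v$ is uncoloured, the colored portion of the distance-$\leq 2$ set of $u$ in $G$ has size at most $1+3+(d(w)-1)\le \Delta+3$, so $u$ has an available color; assign it. The resulting partial coloring is a legal $2$-distance coloring of $G$ with $v$ as the unique uncolored vertex (the only new distance-$\leq 2$ constraint introduced by passing from $G-u$ to $G$ that does not involve $u$ is the pair $(v,w)$, and $v$ is uncolored). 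Since $v$ is not heavy, Proposition~\ref{prop:non-heavy} supplies an available color for $v$, yielding a $2$-distance coloring of $G$ with $\Delta+4$ colors, contradicting the choice of $G$.

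For parts (b) and (c), part (a) immediately forces $v$ heavy, so $D(v)\ge \Delta+4+n^l(v)$. The ``$20^+$-neighbour'' consequence in (b) then reduces to the heaviness inequality $7+d(x_4)=D(v)\ge\Delta+4+n^l(v)$: a short case analysis on the degrees of $y_1,y_2$ and the third neighbour of $x_3$ produces $n^l(v)\ge 1$, whence $d(x_4)\ge\Delta-2\ge 20$. To show each $x_i$ is heavy, suppose the contrary for some $x_i$. By minimality $G-vx_i$ has a $2$-distance coloring $f$; I would uncolor $v$ together with the light neighbours of $v$, then color $x_i$ (using the non-heaviness of $x_i$, which forces either $v$ or $y_i$ to be light by Proposition~\ref{prop:light-heavy} and so creates a viable recoloring to free a color for $x_i$), and finally recolor $v$ and the uncolored light vertices via Corollary~\ref{cor:light-ext} and Proposition~\ref{prop:non-heavy}. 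The resulting $2$-distance coloring of $G$ with $\Delta+4$ colors yields the contradiction. Part (c) is handled by the same template, with the three $2$-neighbours playing the role of $\{x_1,x_2\}\cup\{x_3\}$ in (b).

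\noindent\textbf{Main obstacle.} The delicate point in parts (b) and (c) is that $v$ itself is heavy, so Proposition~\ref{prop:non-heavy} does not apply to $v$ directly. The resolution is to exploit the assumed non-heaviness of $x_i$ to force either $v$ or $y_i$ to be light (via Proposition~\ref{prop:light-heavy}), and then to orchestrate the uncoloring of exactly the right subset of vertices so that the critical application of Proposition~\ref{prop:non-heavy} lands on $x_i$ or its light neighbour rather than on $v$. Verifying that each intermediate partial coloring is a legal $2$-distance coloring of $G$ (with a well-defined uncolored set on which Corollary~\ref{cor:light-ext} is valid) is the technical heart of the argument, and the reason the bound $\Delta\ge 22$ is comfortable but not sharp.
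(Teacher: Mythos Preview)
Your treatment of part (a) is fine and essentially matches the paper (the paper deletes the edge $uv$ rather than the vertex $u$, but the recoloring logic is the same).

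Parts (b) and (c), however, contain two genuine gaps.

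\textbf{First gap: the use of Proposition~\ref{prop:light-heavy} is backwards.} You write that the non-heaviness of $x_i$ ``forces either $v$ or $y_i$ to be light by Proposition~\ref{prop:light-heavy}''. The contrapositive of that proposition says exactly the opposite: if a vertex has a non-heavy neighbour, then that vertex is \emph{not} light. So assuming $x_i$ is not heavy tells you that both $v$ and $y_i$ are not light, which gives you nothing to work with. The paper does not try to leverage lightness of $v$ or $y_i$ at all. Instead, to show (say) $x_1$ is heavy, it deletes $vx_2$, decolors $v,x_1,x_2,x_3$ and the $2$-neighbour $p$ of $x_3$, and then recolors $v$ \emph{first} by a direct count: with four of its distance-$\le 2$ vertices uncolored, $v$ has at most $d(x_4)+3\le\Delta+3$ forbidden colors. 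The remaining vertices $x_3,x_2,x_1,p$ are then recolored in that order, each by a direct count (with $x_1$ using the assumed non-heaviness). The point you call the ``main obstacle'' is thus resolved not by finding a light neighbour of $x_i$, but by stripping enough vertices around $v$ that $v$ can be recolored by brute force.

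\textbf{Second gap: the $20^+$-neighbour does not follow from heaviness of $v$ alone.} Your inequality $7+d(x_4)\ge\Delta+4+n^l(v)$ gives $d(x_4)\ge\Delta-3\ge 19$ when $n^l(v)=0$, which is one short. Your ``short case analysis'' to force $n^l(v)\ge 1$ fails in the case $d(y_1)=d(y_2)=\Delta$ and the third neighbour of $x_3$ has degree at least $\Delta-1$: then none of $x_1,x_2,x_3$ is light, and nothing you have written rules this configuration out. The paper handles the $20^+$ claim by a separate minimality argument: assuming $d(x_4)\le 19$, it deletes $vx_1$, decolors $v,x_1,p$, recolors $x_1$ (at most $\Delta+3$ forbidden since $v$ is uncolored), then recolors $v$ directly because $D(v)\le 26$ and $p$ is uncolored, so $v$ has at most $25\le\Delta+3$ forbidden colors. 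This is where $\Delta\ge 22$ is actually used.
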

\begin{proof}
$(a)$. 
Let $u$ be a $2$-neighbour of $v$. Assume to the contrary that $v$ is not heavy, i.e., $D(v)< \D+4+n^l(v)$. Consider a 2-distance $\D+4$ coloring of  $G-uv$, we decolor $u$ and $v$. Clearly $u$ has an available color since  $D(u)\leq \D+4$ and $v$ is uncolored, so we recolor it. Now, the only remaining uncolored vertex is $v$, and recall that $D(v)< \D+4+n^l(v)$. It then follows from Proposition \ref{prop:non-heavy} that we can extend this coloring to the whole graph, a contradiction. 


$(b)$. Suppose that $v$ has two $2$-neighbours $x_1,x_2$ and a $3(1)$-neighbour $x_3$. By $(a)$, $v$ is heavy vertex. Assume for a contradiction that one of $x_1,x_2$ is a non-heavy, say $x_1$. So we have $D(x_1)< \D+4+n^l(x_1)$. Consider a 2-distance $\D+4$ coloring of  $G-vx_2$, we decolor $v,x_1,x_2,x_3$ and the $2$-neighbour of $x_3$. Clearly $v$ currently has at most $\D+3$ forbidden colors, so we give an available color to it. Next, we recolor $x_3,x_2,x_1$ (in this order) by Proposition \ref{prop:non-heavy}, and the $2$-neighbour of $x_3$ by Corollary \ref{cor:light-ext}. So we obtain a proper $2$-distance $\D+4$ coloring of $G$, a contradiction. Thus, both $x_1$ and $x_2$ are heavy vertices. Similarly it can be shown that $x_3$ is heavy. 
On the other hand, let $z$ be the neighbour of $v$ other than $x_1,x_2,x_3$, and suppose that $z$ is a $19^-$-vertex. Similarly as above, consider a 2-distance $\D+4$ coloring of  $G-vx_1$, we decolor $v,x_1$ and the $2$-neighbour of $x_3$. Clearly $x_1$  currently has at most $\D+3$ forbidden colors, so we give an available color to it. Next, we recolor $v$ with an available color since it has at most $25$ forbidden colors, and finally the $2$-neighbour of $x_3$ by Corollary \ref{cor:light-ext}. So we obtain a proper $2$-distance $\D+4$ coloring of $G$, a contradiction.



$(c)$. It has a similar proof with $(b)$. 
\end{proof}



\begin{lem} \label{lem:5-12}
Let $v$ be a $k$-vertex with $5\leq k\leq 12$, and let $S$ be the set of $2$-neighbours of $v$.
If $n_2(v)=k-1$, and  $S$ has $r$ light vertices for $1\leq r \leq k-1$, then $v$ has a neighbour of degree at least $\D+6+r-2k$.
\end{lem}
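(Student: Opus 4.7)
The plan is to argue by contradiction. Suppose $v$ has no neighbour of degree at least $\D+6+r-2k$. Since $v$ has exactly one non-$2$-neighbour, call it $z$, and its $2$-neighbours trivially satisfy the degree bound ($2<\D+6+r-2k$ when $\D\geq 22$, $k\leq 12$, $r\geq 1$), this reduces to $d(z)\leq \D+5+r-2k$. Write $x_1,\dots,x_{k-1}$ for the $2$-neighbours of $v$, with $x_1,\dots,x_r$ the light ones, and let $y_i$ be the other neighbour of $x_i$. By Corollary \ref{cor:properties}(a), $n_2^3(x_i)=0$ for each $i$, so ``$x_i$ is light'' unpacks to $D(x_i)=k+d(y_i)<\D+4$; hence $d(y_i)\leq \D+3-k$ whenever $i\leq r$.

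By minimality, $G-vz$ admits a 2-distance $(\D+4)$-coloring. I would decolor $v$ together with $x_1,\dots,x_r$ and then recolor $v$ first. The colored vertices at distance at most $2$ from $v$ in $G$ are the $k-r$ still-colored neighbours $x_{r+1},\dots,x_{k-1},z$, the $k-1$ vertices $y_1,\dots,y_{k-1}$, and the $d(z)-1$ neighbours of $z$ distinct from $v$, so the count is at most
\[
(k-r)+(k-1)+(d(z)-1)\;=\;2k-r+d(z)-2\;\leq\;\D+3.
\]
Hence $v$ has an available color among the $\D+4$.

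After $v$ is colored, the next step is to recolor $x_1,\dots,x_r$ one at a time. Each light $x_i$ has its distance-$\leq 2$ neighbourhood contained in $\{v,y_i,z\}\cup\{x_j:j\neq i\}\cup(N(y_i)\setminus\{x_i\})$, giving at most $k+d(y_i)\leq \D+3$ forbidden colors, so an available color always exists; equivalently Corollary \ref{cor:light-ext} applies since each $x_i$ is light. The resulting 2-distance $(\D+4)$-coloring of $G$ contradicts the minimality of $G$.

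The main point to verify is the tightness in the second paragraph: decoloring exactly the $r$ light $2$-neighbours of $v$ frees $r$ colors, which is precisely what is needed to absorb the $+r$ in the bound $d(z)\leq \D+5+r-2k$, pushing the forbidden count at $v$ down to $\D+3$. Deleting $vz$ rather than some $vx_i$ is the convenient choice because $v$ itself is decolored, so the distance-$2$ contributions from $z$'s side enter the count naturally without introducing extra conditions to reconcile.
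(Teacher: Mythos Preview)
Your proof is correct, but it takes a longer route than the paper. The paper observes that since $v$ has a light neighbour (any one of the $r$ light $2$-neighbours), Proposition~\ref{prop:light-heavy} forces $v$ itself to be heavy; hence $D(v)\geq \D+4+n^l(v)\geq \D+4+r$, and since $D(v)=2(k-1)+d(z)$ this immediately gives $d(z)\geq \D+6+r-2k$. No contradiction argument or recoloring is needed.

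What you have done is essentially inline the proof of Proposition~\ref{prop:light-heavy} (and Proposition~\ref{prop:non-heavy}) in this particular situation: delete an edge, decolor $v$ and its light neighbours, recolor $v$ using the freed slots, then reinstate the light vertices. This is perfectly valid and self-contained, but the paper has already packaged exactly this mechanism into the light/heavy framework, so invoking Proposition~\ref{prop:light-heavy} gives the result in two lines. Your approach buys independence from that proposition at the cost of repeating its argument; the paper's approach buys brevity by reusing the machinery it set up earlier.
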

\begin{proof}

Let  $n_2(v)=k-1$. This implies that $v$ has only one $3^+$-neighbour, say $z$.  Since $v$ has some light $2$-neighbours, $v$ must be heavy by Proposition \ref{prop:light-heavy}, and so $D(v)\geq  \D+4+n^l(v)\geq \D+4+r$. We therefore conclude that $z$ has at least $\D+4+r-(2k-2)$ neighbours. 
\end{proof}

%
%

We call a path $xyz$ as a \emph{poor path} if $5\leq d(y) \leq 6$ and $d(x)=d(z)=2$. If a poor path $xyz$ lies on the boundary of a face $f$, we call the vertex $y$ as \emph{f-poor vertex}.

It follows from the definition of the poor path that we can bound the number of those paths in a face, where we recall that $G$ has no adjacent $2$-vertices by Corollary \ref{cor:properties}(a).

\begin{cor}\label{cor:poor-path}
Let $f$ be a face having a poor path $xyz$ on its boundary. Then $f$ has at most  $ \floor[\big]{\frac{\ell(f)}{2}}$  $f$-poor vertices.  In particular, if $y$ is weak adjacent to two $7^+$-vertices lying on $ f$, then  $f$ has at most  $\floor[\big]{ \frac{\ell(f)-3}{2}}$  $f$-poor vertices. 
\end{cor}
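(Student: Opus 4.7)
The plan is to exploit the rigid local structure that an $f$-poor vertex imposes on the boundary of $f$. An $f$-poor vertex $y$ (with $d(y)\in\{5,6\}$) sits on the boundary strictly between two $2$-vertices, so two distinct $f$-poor vertices cannot be adjacent along the boundary of $f$: adjacency would force one of them to play the role of $2$-neighbour of the other, contradicting $d(y)\in\{5,6\}$. Combined with Corollary~\ref{cor:properties}(a), this guarantees that consecutive $f$-poor vertices along the boundary are separated by at least one intermediate boundary vertex, which must itself be a $2$-vertex (possibly shared between both as a common $2$-neighbour).

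For the first bound, I would enumerate the $f$-poor vertices cyclically as $y_1,y_2,\ldots,y_k$ around the boundary of $f$ and observe that the cyclic gap between $y_i$ and $y_{i+1}$ contains at least one boundary vertex. Summing the $k$ cyclic gaps therefore gives $\ell(f)\ge 2k$, hence $k\le\floor{\ell(f)/2}$.

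For the ``in particular'' refinement, assume the $f$-poor vertex $y$ is weak-adjacent, through its two $2$-neighbours $x$ and $z$, to two $7^+$-vertices $w_1,w_2$ lying on $f$. Then $w_1$ is the non-$y$ neighbour of $x$ and $w_2$ is the non-$y$ neighbour of $z$, so the boundary of $f$ contains the segment $w_1,x,y,z,w_2$ of five consecutive vertices (note $w_1\ne w_2$, otherwise $w_1xyzw_1$ would be a $4$-cycle, violating $g(G)\ge 5$). Among these five vertices, only $y$ is $f$-poor. Now consider the complementary arc of length $\ell(f)-5$ from $w_2$ back to $w_1$. Its first vertex (adjacent to $w_2$) cannot be $f$-poor because that would require $w_2$ to be its $2$-neighbour, contradicting $d(w_2)\ge 7$; symmetrically for the last vertex (adjacent to $w_1$). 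The remaining $\ell(f)-7$ interior positions of the arc host at most $\ceil{(\ell(f)-7)/2}$ $f$-poor vertices by the separation observation above. Adding $y$ itself yields a total of at most
\[
1+\ceil[\big]{(\ell(f)-7)/2}=\ceil[\big]{(\ell(f)-5)/2}=\floor[\big]{(\ell(f)-4)/2}\le\floor[\big]{(\ell(f)-3)/2},
\]
as required, and this chain also covers the degenerate small cases $\ell(f)\in\{5,6\}$ where only $y$ is $f$-poor.

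The only delicate point is the separation argument: one must \emph{permit} a single $2$-vertex to serve simultaneously as the right-$2$-neighbour of $y_i$ and the left-$2$-neighbour of $y_{i+1}$ (so the forced gap is one position, not two), while still \emph{forbidding} $f$-poor vertices themselves from coinciding or from lying on adjacent boundary positions. Treating the boundary of $f$ as a cyclic walk with positions counted by multiplicity handles non-simple face boundaries without altering the counts.
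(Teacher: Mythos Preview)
Your proof is correct. The paper itself provides no argument beyond the one-line remark that the bound ``follows from the definition of the poor path\ldots where we recall that $G$ has no adjacent $2$-vertices,'' so your write-up is a legitimate elaboration of what is left to the reader. The key observation---that both boundary neighbours of an $f$-poor vertex are $2$-vertices, hence two $f$-poor vertices can never occupy adjacent boundary positions---is exactly the intended mechanism, and your cyclic-gap count $\ell(f)\ge 2k$ is the natural way to extract the first bound.

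Two minor comments. First, you invoke Corollary~\ref{cor:properties}(a) in your opening paragraph, but your actual counting never uses it: the separation of $f$-poor vertices follows purely from the degree constraint $d(y)\in\{5,6\}$ versus $d(x)=d(z)=2$, not from the absence of adjacent $2$-vertices. This is harmless, but you could drop that clause. Second, for the refined bound your chain actually yields $\lfloor(\ell(f)-4)/2\rfloor$, which is one stronger than the stated $\lfloor(\ell(f)-3)/2\rfloor$ for odd $\ell(f)$; the paper only needs the weaker version (it is calibrated so that a $6^+$-face with charge $\ell(f)-5$ distributes at least $1$ to each $f$-poor vertex via R9), so your sharper estimate is a free bonus rather than a discrepancy.
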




%


In the rest of the paper, we will apply discharging to show that $G$ does not exist. We assign to each vertex $v$ a charge $\mu(v)=\frac{3d(v)}{2}-5$ and to each face $f$ a charge $\mu(f)=\ell(f)-5$. By Euler's formula, we have
$$\sum_{v\in V}\left(\frac{3d(v)}{2}-5\right)+\sum_{f\in F}(\ell(f)-5)=-10$$
Notice that only $2$ and $3$-vertices have negative initial charge. Since the sum of charges is negative, $G$ must have some $2$ and $3$-vertices.

We next present some rules and redistribute accordingly. Once the discharging finishes, we check the final charge $\mu^*(v)$ and $\mu^*(f)$. If $\mu^*(v)\geq 0$ and $\mu^*(f)\geq 0$, we get a contradiction that no such a counterexample can exist.


\subsection{Discharging Rules} \label{sub:} ~\medskip

We apply the following discharging rules. 

\begin{itemize}

\item[\textbf{R1:}] Every $2$-vertex receives $1$  from its each $9^-$-neighbour.
\item[\textbf{R2:}] Every light $3(0)$-vertex receives $\frac{1}{4}$  from its each $8^-$-neighbour.
\item[\textbf{R3:}] Let $v$ be a $3(1)$-vertex.
\begin{itemize}
\item[$(a)$] If $v$ has a neighbour $u$ with $3\leq d(u) \leq 5$, then $v$ receives $\frac{1}{4}$  from $u$.
\item[$(b)$] If $v$ has a neighbour $u$ with $6\leq d(u) \leq 8$, then $v$ receives $\frac{1}{2}$  from $u$.
\end{itemize}

\item[\textbf{R4:}] Let $v$ be weak adjacent to a $5(4)$-vertex $u$ such that $v$ has two $10^+$-neighbours. If $3 \leq d(v) \leq 9$, then $v$ gives $\frac{1}{4}$  to $u$.

\item[\textbf{R5:}] Let $v$ be weak adjacent to a $5(4)$-vertex $u$ such that $v$ has at most one $10^+$-neighbour. 
\begin{itemize}
\item[$(a)$]  If $8 \leq d(v) \leq 9$, then $v$ gives $\frac{1}{8}$  to $u$.
\item[$(b)$]  If $5 \leq d(v) \leq 7$ and  $v$ has two $3^+$-neighbours, then $v$ gives $\frac{1}{8}$  to $u$.
\item[$(c)$]  If $ d(v)=4$ and $n_2(v)=1$, then $v$ gives $\frac{1}{8}$  to $u$.
\item[$(d)$]  If $ d(v)=4$, $n_2(v)=2$, and $v$ has two $9^+$-neighbours, then $v$ gives $\frac{1}{8}$  to $u$.
\end{itemize}

\item[\textbf{R6:}] Every $9$-vertex gives $\frac{1}{2}$  to its each $k$-neighbour for $3\leq k \leq 8$.
\item[\textbf{R7:}] Every $10^+$-vertex transfers its positive charge equally to all of its neighbours. In particular, if a $11^+$-vertex $v$ has a $2$-neighbour $u$, then $v$ gives $1$  to $u$, and 
 $\frac{3d(v)-10}{2d(v)}-1$  to the other neighbour of $u$, instead of sending all of $\frac{3d(v)-10}{2d(v)}$ to $u$.
\item[\textbf{R8:}] If two $10^+$-vertices $u,v$ are adjacent, then each of $u,v$ gives $\frac{1}{2}$
  to the faces containing $uv$.
\item[\textbf{R9:}] Every face $f$ transfers its positive charge equally to  its incident $f$-poor vertices.
\end{itemize}

\noindent
\textbf{Checking} $\mu^*(v), \mu^*(f)\geq 0$, for $v\in V(G), f\in F(G)$\medskip

Clearly $\mu^*(f)\geq 0$ for each $f\in F(G)$, since every face transfers its positive charge equally to all of its incident $f$-poor vertices by R9. \medskip

We pick a vertex $v\in V(G)$ with $d(v)=k$. \medskip

\textbf{(1).} Let $k=2$. Then $\mu(v)=-2$. By Corollary \ref{cor:properties}-(a), $v$ is adjacent to two $3^+$-vertices. Clearly, $v$ receives $1$  from its each neighbour by R1 and R7, so  we have  $\mu^*(v)\geq 0$.  \medskip 


\textbf{(2).} Let $k=3$. Then $\mu(v)=-\frac{1}{2}$. By Corollary \ref{cor:properties}-(b), $v$ has at most one $2$-neighbour. 

First assume that $v$ has no $2$-neighbour. 
If $v$ has a $10^+$-neighbour $u$, then $v$ receives at least $1$  from $u$ by R7, and sends at most $\frac{1}{4}$  to each of the other neighbours by R2 and R3(a). So $\mu^*(v)\geq 0$. 
If $v$ has a $3$-neighbour and has no $10^+$-neighbour, then $v$ is a light vertex, since $D(v)\leq 3+9+9$. It then follows from Proposition \ref{prop:light-heavy} that all neighbours of $v$ are heavy. In this manner,  $v$ is a light $3(0)$-vertex, and so it receives $\frac{1}{4}$  from its each neighbour by R2 and R6. Thus, $\mu^*(v)= -\frac{1}{2} +3\times \frac{1}{4} >0$. 
If $v$ has neither $3$-neighbour nor $10^+$-neighbour, then $v$ is a light vertex or has two $9$-neighbours. In both cases, $v$ receives totally at least $\frac{1}{2}$  from its neighbours  by R2 and R6. So $\mu^*(v)\geq 0$.

Let us now assume that $v$ has a $2$-neighbour $x_1$, and so $v$ is a $3(1)$-vertex by Corollary \ref{cor:properties}-(b). Let $y_1$ be the other neighbour of $x_1$ except for $v$.  Denote by $w$ and $z$ the other neighbour of $v$ with $d(w)\leq d(z)$. Clearly, $x_1$ is a light vertex. By Proposition \ref{prop:light-heavy}, $v$ must be heavy vertex, and so $d(w)+d(z)\geq \D+3 \geq 25$. 
\begin{itemize}
\item If $w$ is a $3$-vertex, then $z$ would be a $\D$-vertex. Note that $w$ is a  $3(0)$-vertex by Lemma \ref{lem:3-vertex-properties}-(c). Additionally,  $w$ is not light vertex since otherwise $v$ could not be heavy. Then, $v$ receives at least $\frac{5}{4}$  from $z$ by R7, and $\frac{1}{4}$  from $w$ by R3(a). Therefore, $\mu^*(v)\geq 0$  after $v$ transfers $1$  to $x_1$ by R1.
\item If $4\leq d(w) \leq 5$, then $z$ is a $20^+$-vertex. 
Thus, $v$ receives at least $\frac{5}{4}$  from $z$ by R7, and $\frac{1}{4}$  from $w$ by R3(a). Therefore, $\mu^*(v)\geq 0$  after $v$ transfers $1$  to $x_1$ by R1. 

\item If $6\leq d(w) \leq 9$, then $z$ is a $16^+$-vertex. 
Thus, $v$ receives at least $1$  from $z$ by R7, and $\frac{1}{2}$  from $w$ by R3(b) and R6.
Therefore, $\mu^*(v)\geq 0$  after $v$ transfers $1$  to $x_1$ by R1. 
\item If $w$ is a $10^+$-vertex,  and so $z$ is a $10^+$-vertex as well, then $v$ receives at least $1$  from each of $w,z$ by R7. Therefore, $\mu^*(v)\geq \frac{1}{4}$  after $v$ transfers $1$  to $x_1$ by R1, and at most $\frac{1}{4}$  to $y_1$ by R4. \medskip  
\end{itemize}

\textbf{(3).} Let $k=4$.  Observe that $\mu^*(v)\geq 0$ by applying R2 and R3(a) when $v$ has no $2$-neighbour. We may therefore assume that $v$ has at least one $2$-neighbour. Recall that all neighbours of $v$ cannot be $2$-vertices by Corollary \ref{cor:properties}-(c). Thus $1\leq n_2(v)=t \leq 3$. Denote by $x_1,x_2,\ldots,x_t$ the $2$-neighbours of $v$, and let $y_i$ be the other neighbour of $x_i$, except for $v$. Note that $v$ is a heavy vertex by Lemma \ref{lem:4-vertex-properties}-(a), and so $D(v)\geq \D+4+n^l(v)$.\medskip

Let $n_2(v) = 1$. If $v$ has a $3$-neighbour $w$, then $v$ has also a $11^+$-neighbour $z$ since $v$ is heavy. Denote by $p$ the neighbour of $v$ other than $x_1,w,z$. By R7, $v$ receives at least $1$  from $z$. Thus have $\mu^*(v)\geq 0$ after $v$ sends $1$  to $x_1$ by R1 and at most $\frac{1}{4}$  to each of $w,p,y_1$ by R2, R3(a), R4 and R5(c). 
If $v$ has no $3$-neighbour, then we consider the vertex $y_1$. If $y_1$ is a $5$-vertex, then $x_1$ would be a light vertex, which implies that $v$ has a $9^+$-neighbour as $D(v)\geq \D+4+n^l(v)\geq 26$. 
Then, $v$ receives at least $\frac{1}{2}$ from its $9^+$-neighbour by R6-R7. Then, we have $\mu^*(v)\geq 0$ after $v$ sends $1$  to $x_1$ by R1 and at most $\frac{1}{4}$  to $y_1$ by R4 and R5(c). 
If $y_1$ is not a $5$-vertex, then we have $\mu^*(v)\geq 0$ after $v$ transfers $1$  to $x_1$ by R1.\medskip

Let $n_2(v) = 2$. Suppose that $v$ has a $3$-neighbour $y$. If $y$ is a $3(1)$-vertex, then, by Lemma \ref{lem:4-vertex-properties}-(b),  both $x_1 $ and $x_2$ are heavy vertices, and $v$ has a $20^+$-neighbour $z$. Thus $v$ receives at least $\frac{5}{4}$  from $z$ by R7, and sends $\frac{1}{4}$  to $y$ by R3(a).  Therefore, $\mu^*(v)\geq 0$  after $v$ transfers $1$  to each $x_i$ by R1. Recall that none of $y_i$'s is a $5$-vertex, because each $x_i$ is heavy.  
If $y$ is a light $3(0)$-vertex, then $v$ has a $20^+$-neighbour $z$, since $D(v)\geq \D+4+n^l(v)$ and $n^l(v)\geq 1$. Thus $v$ receives at least $\frac{5}{4}$  from $z$ by R7, and sends $\frac{1}{4}$  to $y$ by R2.  Notice that the rule R5(d) cannot be applied for $y_1,y_2$ since $v$ has no two $9^+$-neighbour.  Therefore, $\mu^*(v)\geq 0$  after $v$ transfers $1$  to each $x_i$ by R1. Besides, if $y$ is a $3(0)$-vertex that is not light, then $v$ has a $19^+$-neighbour $z$ since $v$ is heavy. Thus $v$ receives more than $1$  from $z$ by R7, and sends $1$  to each $x_i$ by R1. Note that the rule R5(d) cannot be applied for $y_1,y_2$, since $v$ has no two $9^+$-neighbours.  Hence, $\mu^*(v)\geq 0$.

We further suppose that $v$ has no $3$-neighbour. Denote by $y,z$ the $4^+$-neighbours of $v$. Assume that $r$ of $y_i$'s are $5(4)$-vertices for $0\leq r\leq 2$. Observe that $v$ has at least one $10^+$-vertex since $v$ is heavy. By R7, $v$ receives at least $1$ from its $10^+$-neighbour. It then follows that $\mu^*(v)\geq 0$ by applying R1 when $r=0$. So we may assume that $r\geq 1$. 
If $y,z$ are $10^+$-vertices,  then $v$ receives at least $1$  from each $y,z$ by R7, and sends $1$  to each $x_i$ by R1. Also, $v$ sends at most $\frac{1}{4}$  to each $y_i$ by R4. Obviously $\mu^*(v)> 0$.
If $y$ is $8^-$-vertex, then $z$ would be a $15^+$-vertex, since $v$ is heavy, and one of $x_1,x_2$ is light. Observe that $v$ does not need to give a charge to $y_i$ by R5(d), so $\mu^*(v)\geq 0$ after $v$ transfers $1$  to each $x_i$ by R1.  
If $y$ is $9^+$-vertex and $z$ is $10^+$-vertex, then $v$ receives at least $1$  from $z$ by R7, and at least $\frac{1}{2}$  from $y$ by R6-R7. Thus  $\mu^*(v)\geq 0$ after $v$ transfers $1$  to each $x_i$ by R1, and at most $\frac{1}{4}$  to each $y_i$ by R4 and R5(d). \medskip

Let $n_2(v) = 3$. Since $v$ is heavy, it has a $20^+$-neighbour $z$. Also, each $y_i$ is a $\D$-vertex  by Lemma \ref{lem:4-vertex-properties}-(c). It then follows that $v$ receives at least $\frac{5}{4}$  from $z$ by R7, and $\frac{1}{4}$  from each $y_i$ for $i\in [3]$ by R7. Therefore, $\mu^*(v)\geq 0$  after $v$ transfers $1$  to each $x_i$ by R1.  \medskip

\textbf{(4).} Let $k=5$. Observe that $\mu^*(v)\geq 0$ if $v$ has at most one $2$-neighbour by R1, R2, R3(a), R4 and R5(b). So, we may assume that $2\leq n_2(v)=t \leq 5$. Denote by $x_1,x_2,\ldots,x_t$ the $2$-neighbours of $v$, and for each $i\in [t]$, let $y_i$ be the other neighbour of $x_i$, except for $v$.\medskip

Let $n_2(v) = 2$. We first note that if $v$ has two $10^+$-neighbours $x,y$, then $v$ receives at least $1$  from each of them by R7, and so $\mu^*(v)> 0$  after $v$ transfers $1$  to each $x_i$ by R1; $\frac{1}{4}$  to its each $3$-neighbour by R2 and R3(a) (if exists);  at most $\frac{1}{4}$  to each $y_i$ by R4. Then, we may further assume that $v$ has no two $10^+$-neighbours. If $v$ has no $3$-neighbour, then $\mu^*(v)\geq 0$ after $v$ sends $1$  to its each $x_i$ by R1, and   $\frac{1}{8}$  to each $y_i$ by R5(b). We may therefore assume that $v$ has at least one $3$-neighbour. 
 If $v$ has one $3$-neighbour and two $4^+$-neighbours, then we again have $\mu^*(v)\geq 0$ after $v$ sends $1$  to each $x_i$ by R1, $\frac{1}{4}$  to its $3$-neighbour by R2, R3(a), and $\frac{1}{8}$  to each $y_i$ by R5(b). 
On the other hand, if $v$ has two $3$-neighbours and one $4^+$-neighbour, then, either $v$ has a $10^+$-neighbour or each $x_i$ is heavy by Proposition \ref{prop:light-heavy}. In the former, $v$ receives at least $1$  from its $10^+$-neighbour by R7. In the latter, each $x_i$ has a $\D$-neighbour, and so $v$ receives at least $\frac{1}{4}$  from each $y_i$ by R7.  In both cases, we have  $\mu^*(v)> 0$ by applying R1, R2, R3(a) and R5(b). We next assume that $v$ has three $3$-neighbours, say $z_1,z_2,z_3$. Obviously, $v$ is a light vertex, and so each $y_i$ is a $\D$-vertex by Proposition \ref{prop:light-heavy}.  It follows from applying R7 that $v$ receives at least $\frac{1}{4}$  from each $y_i$.   Therefore, $\mu^*(v)\geq  \frac{1}{4}$  after $v$ transfers $1$  to each $x_i$ by R1, and $\frac{1}{4}$  to each $z_i$ by R2 and R3(a).  \medskip

Let $n_2(v) = 3$. Let $z_1,z_2$ be $3^+$-neighbours of $v$. Notice first that if $z_1,z_2$ are $10^+$-vertices, then $v$ receives at least $1$  from each of them by R7, and so $\mu^*(v)\geq 0$  after $v$ transfers $1$  to each $x_i$ by R1, and  $\frac{1}{4}$  to each $y_i$ by R4. We may then further assume that $v$ has no two $10^+$-neighbours. Suppose first that $v$ has a $10^+$-neighbour, say $z_i$. 
If $z_{3-i}$ is a $3$-vertex, then either $z_i$ is $17^+$-vertex or each $x_i$ is heavy by Proposition \ref{prop:light-heavy}. In the former, $v$ receives at least $\frac{9}{8}$  from $z_i$ by R7, and sends  $1$  to each $x_i$ by R1,  $\frac{1}{4}$  to $z_{3-i}$ by R2, R3(a),  and $\frac{1}{8}$  to each $y_i$ by R5(b); so $\mu^*(v)\geq 0$. In the later, none of $x_i$'s has a $5$-neighbour, and so the rule R5 cannot be applied for $v$. It then follows that $v$ receives at least $1$  from $z_i$ by R7, and sends  $1$  to each $x_i$ by R1 and  $\frac{1}{4}$  to $z_{3-i}$ by R2, R3(a); so $\mu^*(v)\geq 0$. 
On the other hand, if $4 \leq d(z_{3-i})\leq 9$, then we similarly have $\mu^*(v)\geq 0$ by applying R7, R1 and R5(b), since $v$ does not need to give a charge  to $z_{3-i}$.
We now assume that $v$ has no $10^+$-neighbour, i.e., each neighbour of $v$ is a $9^-$-vertex. This forces that $v$ is a light vertex, and so each $y_i$ is a $\D$-vertex by Proposition \ref{prop:light-heavy}.
Since $v$ has three $2$-neighbours and two $3^+$-neighbours, there exists a face $f$ containing consecutive two $2$-neighbours of $v$. We may assume without loss of generality that $f$ is a face incident to $y_1,x_1,v,x_2,y_2$.  If $f$ is a $5$-face, i.e., $y_1$ is adjacent to $y_2$, then each of $y_1,y_2$ gives $\frac{1}{2}$  to the faces containing $y_1y_2$ by R8, so $f$ gets totally $1$ charge from $y_1,y_2$. By R9, $f$ transfer its charge to its unique $f$-poor vertex $v$. On the other hand, if $f$ is a $6^+$-face, then $v$ again gets at least $1$ charge  from $f$ by applying R9 together with Corollary \ref{cor:poor-path}. So the current charge of $v$ is at least $\mu(v)+1=\frac{7}{2}$. The vertex $v$ can give $\frac{1}{4}$  to each $z_i$ by R2, R3(a) when $z_i$ is a $3$-vertex. Thus, $\mu^*(v) \geq  0$  after $v$ transfers $1$  to each $x_i$ by R1.  \medskip


Let $n_2(v) = 4$. Suppose that $x_1,x_2,x_3,x_4$ have a clockwise order on the plane. Let $w$ be the $3^+$-neighbour of $v$.\\
First suppose that  $v$ is a light vertex, i.e., $w$ is a vertex of degree at most $\D-5$. So, each $x_i$ is a heavy vertex by Proposition \ref{prop:light-heavy}. This implies that each $y_i$ is a $\D$-vertex. Since $v$ has four $2$-neighbours and one $3^+$-neighbour, there exists a face $f$ containing consecutive two $2$-neighbours of $v$. We may assume without loss of generality that $f$ is a face incident to $y_1,x_1,v,x_2,y_2$. Similarly as above, if $f$ is a $5$-face, i.e., $y_1$ is adjacent to $y_2$, then each of $y_1,y_2$ gives $\frac{1}{2}$  to the faces containing $y_1y_2$ by R8, so $f$ gets totally $1$ charge from $y_1,y_2$, and sends it to $v$ by R9. On the other hand, if $f$ is a $6^+$-face, then $v$ again gets at least $1$ charge from $f$ by applying R8 and R9 together with Corollary \ref{cor:poor-path}. Moreover, each $y_i$ gives $\frac{3}{11}$  to $v$ by R7. So the current charge of $v$ is $\mu(v)+1+\frac{12}{11}=\frac{101}{22}$. The vertex $v$ can give $\frac{1}{4}$  to $w$ by R2, R3(a) when $w$ is a $3$-vertex. Thus, $\mu^*(v) \geq \frac{15}{44}$  after $v$ transfers $1$  to each $x_i$ by R1. \\
Now we suppose that  $v$ is a heavy vertex, i.e., $w$ is a vertex of degree at least $\D-4\geq 18$. By applying R7, $v$ gets at least  $\frac{11}{9}$  from $w$. Remark that if $r$ of $x_i$'s are light with $0\leq r \leq 4$, then $w$ is  a $(\D-4+r)^+$-vertex by Lemma \ref{lem:5-12}. We analysis all cases as follows:
 
If two of $x_i$'s are not light vertices, say $x_1,x_2$, then each of $y_1,y_2$ is a $(\D-1)^+$-vertex. Similarly as above,  each of $y_1,y_2$ gives at least $\frac{1}{4}$  to $v$ by R7.  So the current charge of $v$ is at least $\mu(v)+\frac{11}{9}+\frac{1}{2}=\frac{38}{9}$.  Thus, $\mu^*(v) \geq \frac{2}{9}$  after $v$ transfers $1$  to each $x_i$ by R1, where we note that the rule R5(b) cannot be applied for $y_3,y_4$ since $v$ has no two $3^+$-neighbours.
If two of $x_i$'s are light vertices, say $x_1,x_2$, and one of $x_i$'s is not light vertex, say $x_3$, then $w$ is a $20^+$-vertex. Also, $y_3$ is a $21^+$-vertex since $x_3$ is not light. Similarly as above, $y_3$ gives at least $\frac{1}{4}$  to $v$ by R7. In particular, $v$ gets at least  $\frac{5}{4}$  from $w$ by R7.  So the current charge of $v$ is at least $\mu(v)+\frac{5}{4}+\frac{1}{4}=4$.  Thus, $\mu^*(v)\geq 0 $  after $v$ transfers $1$  to each $x_i$ by R1, where we again note that the rule R5(b) cannot be applied for $y_1,y_2,\ldots, y_4$ since $v$ has no two $3^+$-neighbours. 
Finally, suppose that all $x_i$'s are light vertices. So, $w$ is a $\D$-vertex.  If one of $y_i$'s is a $19^+$-vertex, then $v$ gets totally at least $\frac{3}{2}$  from $w$ and $y_i$ by R7. Similarly as above, we have $\mu^*(v)\geq 0$. We therefore suppose that  each $y_i$ is a $18^-$-vertex for $i\in [4]$. In particular, for each $i\in [4]$,  $y_i$ is heavy since $x_i$ is light by Proposition \ref{prop:light-heavy}.
Recall that all $x_i$'s are light vertices, and $w$ is a $\D$-vertex. By R7, $v$ receives $\frac{14}{11}$ from $w$, so the current charge of $v$ is $\frac{83}{22}$. This means that $v$ needs at least $\frac{5}{22}$ charge in order to send $1$  to each $x_i$, and we will show that $v$ receives totally at least $\frac{1}{4}$ charge from $y_1,y_2,\ldots, y_4$ in the sequel. 

Consider $y_i$ and $y_{i+1}$ for $i\in [3]$, if they are not adjacent, then there exists a $6^+$-face incident to $y_i,x_i,v,x_{i+1},y_{i+1}$. By applying R9 together with Corollary \ref{cor:poor-path}, $f$ sends at least $\frac{1}{3}$  to $v$. Thus we further assume that $y_iy_{i+1}\in E(G)$ for each $i\in [3]$.  

If one of  $y_2,y_3$ is $3$-vertex, say $y_2$, then $y_2$ has two $5^+$-neighbours since each $y_i$ is a heavy, $x_i$ is light and $d(y_i)\leq 18$ for $i\in [4]$. In particular, if $y_2$ has two $10^+$-neighbour, then $y_2$ sends $\frac{1}{4}$  to $v$ by R4. Thus assume that $y_2$ has no two $10^+$-neighbours. Then $y_2$ has a $16^+$-neighbour $y_p$, and a $9^-$-neighbour $y_q$ with $d(y_q)\geq 5$,  for $p,q\in \{1,3\}$. Note that $y_q$ has at least two $3^+$-neighbours since it is heavy vertex and $y_2$ is a $3$-neighbour of $y_q$. It then follows that  $v$ gets at least $\frac{3}{16}$  from $y_p$ by R7, and at least $\frac{1}{8}$  from $y_q$ by R4, R5(a)-(b), so  $v$ gets totally at least $\frac{1}{4}$ charge. Thus we further assume that both  $y_2$ and $y_3$ are $4^+$-vertices.

Suppose first that $y_2$ and $y_3$ are $4$-vertices. If $y_2$ has another $2$-neighbour other than $x_2$, then $y_2$ has also a $19^+$-neighbour $z$ since $y_2$ is heavy and $x_2$ is light. We deduce that $z=y_1$ since $d(y_3)=4$, however it contradicts with the assuming $d(y_i)\leq 18$ for each $i\in [4]$. 
Thus $n_2(y_2)=1$. 
It then follows that $y_2$ sends at least $\frac{1}{8}$  to $v$ by R4,R5(c). By symmetry, $v$ receives at least $\frac{1}{8}$  from $y_3$, so  $v$ gets totally at least $\frac{1}{4}$ charge.

Suppose now that $y_2$ is $4$-vertex, and $y_3$ is a vertex with $5\leq d(y_3) \leq 9$. Observe that $y_3$ has two $3^+$-neighbours, so $y_3$ sends $\frac{1}{8}$  to $v$ by R5(a)-(b). We will next show that $v$ receives totally at least $\frac{1}{8}$ charge from $y_1,y_2$.
If $n_2(y_2)=1$, then $y_2$ sends at least $\frac{1}{8}$  to $v$ by R4, R5(c). 
We then suppose that $n_2(y_2)=2$. Recall that $y_2$ cannot have three $2$-neighbours since it has at least two $3^+$-neighbours $y_1,y_3$. 
If $y_2$ has two $9^+$-neighbours, then $y_2$ sends at least $\frac{1}{8}$  to $v$ by R4, R5(d).  If  $y_2$ has no two $9^+$-neighbours, then the rule R5 cannot be applied for $y_2$. 
In this case, however, we have $5\leq d(y_3) \leq 8$  and $14\leq d(y_1)$ since $y_2$ is heavy, $x_2$ is light, $5\leq d(y_3) \leq 9$ and $d(y_i)\leq 18$ for each $i\in [4]$. Then $y_1$ gives at least $\frac{1}{7}$ to $v$ by R7, so $v$ gets totally at least $\frac{1}{4}$ charge.


We finally suppose that $y_2$ and $y_3$ are $5^+$-vertex. If $d(y_2), d(y_3)\notin \{10,11,12,13\}$, then each of $y_2,y_3$ gives at least $\frac{1}{8}$  to $v$ by R4, R5(a)-(b), R7. 
Note that if both $y_2$ and $y_3$ are $10^+$-vertices, then we are done by R8. We may assume without lose of generality that $d(y_2)\in \{10,11,12,13\}$ and $5 \leq d(y_3) \leq 9$. By R5(a)-(b), $y_3$ gives $\frac{1}{8}$  to $v$.  So we only consider the vertex $y_2$, and we will show that $y_1$ gives at least $\frac{1}{8}$   to $v$ instead of $y_2$. Notice first that if $y_1$ and $y_2$ are $10^+$-vertices, then we are done by R8. So assume that $d(y_1)\leq 9$. If $8 \leq d(y_1) \leq 9$, then $y_1$ gives at least $\frac{1}{8}$   to $v$ by R4, R5(a). If $5 \leq d(y_1) \leq 7$, then $y_1$ would have an $3^+$-neighbour other than $y_2$ since $y_1$ is heavy,  so $y_1$ gives at least $\frac{1}{8}$   to $v$ by R5(b). If $d(y_1) = 4$ and $n_2(y_1)=1$, then  $y_1$ gives $\frac{1}{8}$  to $v$ by R5(c).   
If $d(y_1) = 4$ and $n_2(y_1)=2$, then  $y_1$ would have two $9^+$-neighbours, so $y_1$ gives $\frac{1}{8}$  to $v$ by R5(d). Recall that the case $d(y_1) = 4$ and $n_2(y_1)>2$ is not possible since $y_1$ is heavy and it is adjacent to $y_2$ with $d(y_2)\in \{10,11,12,13\}$. 
If $d(y_1)=3$, then $y_1$ would have two $10^+$-neighbours, so $y_1$ gives at least $\frac{1}{4}$   to $v$ by R4.  Consequently, $v$ receives totally at least $\frac{1}{4}$ charge from $y_1,y_2,y_3$ . Thus, $\mu^*(v)\geq 0$  after $v$ transfers $1$  to each $x_i$ by R1. \medskip


Let $n_2(v) = 5$. Suppose that $x_1,x_2,\ldots,x_5$ have a clockwise order on the plane. 
Obviously  $v$ is a light vertex. So, each $x_i$ is a heavy vertex by Proposition \ref{prop:light-heavy}. This implies that each $y_i$ is a $\D$-vertex, so each $y_i$ gives $\frac{3}{11}$  to $v$ by R7. Let $f_i$ be a face incident to $y_i,x_i,v,x_{i+1},y_{i+1}$ for $i=1,2$. If $f_1$ is a $5$-face, i.e., $y_1$ is adjacent to $y_2$, then each of $y_1,y_2$ gives $\frac{1}{2}$  to the faces containing $y_1y_2$ by R8, so $f_1$ gets totally $1$ charge, and transfers its charge to $v$ by applying R9 together with Corollary \ref{cor:poor-path}. On the other hand, if $f_1$ is a $6^+$-face, then $v$ again receives at least $1$  from $f_1$ by applying R9 together with Corollary \ref{cor:poor-path}. Similarly, $v$ receives at least $1$  from $f_2$ as well. So the current charge of $v$ is $\mu(v)+2+\frac{15}{11}=\frac{129}{22}$. Thus, $\mu^*(v)\geq \frac{19}{22}$  after $v$ transfers $1$  to each $x_i$ by R1. \medskip

\textbf{(5).} Let $k=6 $. Notice first that if all neighbours of $v$ are $3^-$-vertex, then $v$ is a light vertex, so $v$ cannot be weak adjacent to any $5$-vertex by Proposition \ref{prop:light-heavy}, which implies that the rules R4, R5 cannot be applied for $v$. Thus, we conclude that if $v$ has at most two $2$-neighbours, then $\mu^*(v)\geq 0$ by R1-R5, R7.  We may therefore assume that $3\leq n_2(v)=t \leq 6$. Denote by $x_1,x_2,\ldots,x_t$ the $2$-neighbours of $v$, and let $y_i$ be the other neighbour of $x_i$ except for $v$. \medskip

Let $n_2(v)=3$.  If $v$ has two $10^+$-neighbours, then $v$ receives at least $1$  from each of them by R7, and so $\mu^*(v)\geq 0$  after $v$ transfers $1$  to each $x_i$ by R1,  at most $\frac{1}{2}$  to each $3$-neighbour by R2, R3 and at most $\frac{1}{4}$  to each $y_i$ by R4. We may then assume that $v$ has no two $10^+$-neighbours.
If $v$ has at most one $3$-neighbour, then $\mu^*(v)\geq 0$ after  $v$ sends $1$  to each $x_i$ by R1, at most $\frac{1}{2}$  to the $3$-neighbour by R2, R3 and at most $\frac{1}{8}$  to each $y_i$ by R5(b).
Suppose next that $v$ has exactly two $3$-neighbours $z_1,z_2$. Denote by $z_3$ the last neighbour of $v$, if it is $10^+$-vertex, then $\mu^*(v)\geq 0$ similarly as above,  
since $v$ gets at least $1$  from $z_3$. Otherwise, if $z_3$ is  $9^-$-vertex, then $v$ would be a light vertex, and so each $x_i$ is heavy by Proposition \ref{prop:light-heavy}, i.e., each $y_i$ is a $20^+$-vertex. 
So, $\mu^*(v)> 0$ after  $v$ sends $1$  to each $x_i$ by R1 and at most $\frac{1}{2}$  to each $3$-neighbour by R2-R3.
 Now we assume that $v$ has three $3$-neighbours, so $v$ is a light vertex. Then each  $x_i$ is a heavy vertex by Proposition \ref{prop:light-heavy}, i.e., each $y_i$ is a $20^+$-vertex. By applying R7, $v$ gets totally at least $\frac{1}{2}$  from $y_1,y_2,y_3$ by R7. Thus $\mu^*(v)\geq 0$ after  $v$ sends $1$  to each $x_i$ by R1 and at most $\frac{1}{2}$  to each $3$-neighbour by R2-R3. \medskip

Let $n_2(v)=4$. Denote by $z_1,z_2$ the $3^+$-neighbours of $v$.   Clearly  $\mu^*(v) \geq 0$ when $v$ has  no $3$-neighbour and none of $y_i$ is a $5(4)$-vertex by R1.  If $v$ has a $3$-neighbour and a $9^-$-neighbour, then $v$ would be a light vertex. It follows from Proposition \ref{prop:light-heavy} that each $x_i$ is heavy, i.e., each $y_i$ is a $20^+$-vertex. By applying R7, $v$ gets totally at least $1$  from $y_1,y_2,y_3,y_4$. Thus $\mu^*(v) \geq 0$ after  $v$ sends $1$  to each $x_i$ by R1 and at most $\frac{1}{2}$  to each $3$-neighbour by R2-R3. 
If $v$ has a $3$-neighbour $z_1$ and a $10^+$-neighbour $z_2$, then $z_2$ gives $1$  to $v$ by R7, and so  $\mu^*(v) \geq 0$ after  $v$ sends $1$  to each $x_i$ by R1, at most $\frac{1}{2}$  to the $3$-neighbour by R2-R3 and at most $\frac{1}{8}$  to each $y_i$ by R5(b).  
We further assume that $z_1,z_2$ are $4^+$-vertices.  If one of them is $9^+$-vertex, say $z_1$, then $v$ receives at least $\frac{1}{2}$  from $z_1$ by R6, and so  $\mu^*(v) \geq 0$ by applying the same process as above. We may then assume that $z_1,z_2$ are $8^-$-vertices. Clearly $v$ is a light vertex. It follows from Proposition \ref{prop:light-heavy} that each $x_i$ is heavy, i.e., each $y_i$ is a $20^+$-vertex.  
Thus $\mu^*(v) \geq 0$ after  $v$ sends $1$  to each $x_i$ by R1. \medskip

Let $n_2(v)=5$. Denote by $z$ the $3^+$-neighbour of $v$.  If $z$ is a $10^+$-vertex, then it gives $1$  to $v$ by R7, and so  $\mu^*(v) \geq 0$ after  $v$ sends $1$  to each $x_i$ by R1. Note that the rule R5(b) cannot be applied since $v$ has no two $3^+$-neighbours. 
If $z$ is a $9^-$-neighbour, then $v$ would be a light vertex. It follows from Proposition \ref{prop:light-heavy} that each $x_i$ is heavy, i.e., each $y_i$ is a $20^+$-vertex. By applying R7, $v$ gets totally at least $\frac{5}{4}$  from $y_1,y_2,\ldots,y_5$. On the other hand, let $f$ be a face incident to $y_1,x_1,v,x_2,y_2$. If $f$ is a $5$-face, i.e., $y_1$ is adjacent to $y_2$, then each of $y_1,y_2$ gives $\frac{1}{2}$  to the faces containing $y_1y_2$ by R8, so $f$ gets totally $1$ charge. By applying R9 together with Corollary \ref{cor:poor-path}, $f$ transfer its charge to $v$. Besides, if $f$ is a $6^+$-face, then $v$ again gets at least $1$  from $f$ by applying R9 together with Corollary \ref{cor:poor-path}. Thus $\mu^*(v) \geq 0$ after  $v$ sends $1$  to each $x_i$ by R1 and at most $\frac{1}{2}$  to the $3$-neighbour by R2-R3 (if exists).  \medskip

Let $n_2(v)=6$. Since $v$ is a light vertex, each $x_i$ is heavy by Proposition \ref{prop:light-heavy}, and so each $y_i$ is a $20^+$-vertex. By applying R7, $v$ gets totally at least $\frac{3}{2}$  from $y_1,y_2,\ldots,y_6$.  Let $f$ be a face incident to $y_1,x_1,v,x_2,y_2$. Similarly as above, $v$ gets at least $1$  from $f$. So the current charge of $v$ is at least $\frac{11}{2}$. Thus, $\mu^*(v)> 0$  after $v$ transfers $1$  to each $x_i$ by R1. \medskip

\textbf{(6).} Let $k=7 $. Observe that $\mu^*(v)\geq 0$ if $v$ has at most three $2$-neighbours by R1-R5, R7. So, we may assume that $4\leq n_2(v)=t \leq 7$. Denote by $x_1,x_2,\ldots,x_t$ the $2$-neighbours of $v$, and let $y_i$ be the other neighbour of $x_i$ except for $v$. \medskip

Let $n_2(v)=4$. Notice first that if $v$ has two $10^+$-neighbours, then $v$ receives at least $1$  from each of them by R7, and so $\mu^*(v)\geq 0$  after $v$ transfers $1$  to each $x_i$ by R1,  at most $\frac{1}{2}$  to its each $3$-neighbour by R2-R3, and  $\frac{1}{4}$  to each $y_i$ by R4. We may then assume that $v$ has no two $10^+$-neighbours.  If $v$ has three $3$-neighbours, then $v$ is a light vertex, and so each $y_i$ is a $19^+$-vertex by Proposition \ref{prop:light-heavy}. It then follows that $\mu^*(v)\geq 0$ after  $v$ sends $1$  to each $x_i$ by R1, and at most $\frac{1}{2}$  to each $3$-neighbour by R1-R3. If $v$ has a $4^+$-neighbour, then $\mu^*(v)\geq 0$ after  $v$ sends $1$  to each $x_i$ by R1, and at most $\frac{1}{2}$  to each $3$-neighbour by R2-R3 and at most $\frac{1}{8}$  to each $y_i$ by R5(b). \medskip

Let $n_2(v)=5$. Denote by $z_1,z_2$ the $3^+$-neighbours of $v$. Similarly as above, if $v$ has two $10^+$-neighbours, then $\mu^*(v)\geq 0$. Assume that $v$ has no such two neighbours. 
Suppose that $z_1$ is a $3$-vertex. If $d(z_2) \leq 9$,  then $v$ is a light vertex, and so each $y_i$ is a $19^+$-vertex by Proposition \ref{prop:light-heavy}. By applying R7, $v$ gets totally at least $1$  from $y_1,y_2,\ldots,y_5$. Thus $\mu^*(v) \geq 0$ after  $v$ sends $1$  to each $x_i$ by R1 and at most $\frac{1}{2}$  to each $3$-neighbour by R2-R3.
If $d(z_2) \geq 10$, then $z_2$ gives $1$  to $v$ by R1, and so  $\mu^*(v) \geq 0$ by applying the same process as above. Suppose now that $z_1,z_2$ are $4^+$-neighbours. If $v$ is not weak adjacent to any $5(4)$-vertex, then  $\mu^*(v) \geq 0$ after  $v$ sends $1$  to each $x_i$ by R1. If $v$ is weak adjacent to a $5(4)$-vertex, then $v$ would have a $9^+$-neighbour, say $z_1$.  Thus $z_1$ gives at least $\frac{1}{2}$  to $v$ by R6-R7, and so  $\mu^*(v) \geq 0$ after  $v$ sends $1$  to each $x_i$ by R1 and at most $\frac{1}{8}$  to each $y_i$ by R5(b). \medskip


Let $n_2(v)=6$. If $v$ has a $10^+$-neighbour $z$,  then $z$ gives $1$  to $v$ by R1, and so  $\mu^*(v)\geq 0$ after  $v$ sends $1$  to each $x_i$ by R1. We may then assume that  $v$ has no $10^+$-neighbour. Then $v$ is a light vertex, and so each $y_i$ is a $19^+$-vertex by Proposition \ref{prop:light-heavy}. By applying R7, $v$ gets totally at least $1$  from $y_1,y_2,\ldots,y_6$. Thus $\mu^*(v) \geq 0$ after  $v$ sends $1$  to each $x_i$ by R1 and at most $\frac{1}{2}$  to its $3$-neighbour by R2-R3 (if exists). \medskip

Let $n_2(v)=7$. Since $v$ is a light vertex, each $x_i$ is  heavy, and so each $y_i$ is a $19^+$-vertex. By applying R7, $v$ gets totally at least $\frac{3}{2}$  from $y_1,y_2,\ldots,y_7$.  Thus, $\mu^*(v)> 0$  after $v$ transfers $1$  to each $x_i$ by R1. \medskip

\textbf{(7).} Let $k=8 $. Observe that $\mu^*(v)\geq 0$ if $v$ has at most five $2$-neighbours by R1-R5, R7. So, we may assume that $6\leq n_2(v)=t \leq 8$. Denote by $x_1,x_2,\ldots,x_t$ the $2$-neighbours of $v$, and let $y_i$ be the other neighbour of $x_i$ except for $v$.

Let $n_2(v)=6$. If $v$ has two $10^+$-neighbours $z_1,z_2$,  then each of them gives at least $1$  to $v$ by R7, and so  $\mu^*(v)\geq 0$ after  $v$ sends $1$  to each $x_i$ by R1 and at most $\frac{1}{4}$  to each $y_i$ by R4. We therefore assume that $v$ has no two $10^+$-neighbours. If $v$ has no $3$-neighbour, then  $\mu^*(v)\geq 0$ after  $v$ sends $1$  to each $x_i$ by R1, and  at most $\frac{1}{8}$  to each $y_i$ by R5(a). If $v$ has two $3$-neighbours, then $v$ would be a light vertex, i.e., $v$ is not weak adjacent to any $5$-vertex by Proposition \ref{prop:light-heavy}. Thus $\mu^*(v)\geq 0$ after  $v$ sends $1$  to each $x_i$ by R1, and at most $\frac{1}{2}$  to each $3$-neighbour by R2-R3. 
We further suppose that $v$ has  exactly one $3$-neighbour $z_1$ and one $4^+$-neighbour $z_2$. If $d(z_2)\leq 9$, then $v$ would be a light vertex, and so we have $\mu^*(v)\geq 0$ by applyign above process.  If $d(z_2)\geq 10$, then $z_2$ gives $1$  to $v$ by R7, and so  $\mu^*(v) \geq 0$ after  $v$ sends $1$  to each $x_i$ by R1,  at most $\frac{1}{2}$  to each $3$-neighbour by R2-R3, and  at most $\frac{1}{8}$  to each $y_i$ by R5(a). 

Let $n_2(v)=7$. If $v$ has a $10^+$-neighbour $z$,  then $z$ gives $1$  to $v$ by R7, and so  $\mu^*(v)\geq 0$ after  $v$ sends $1$  to each $x_i$ by R1 and $\frac{1}{8}$  to each $y_i$ by R5(a). Assume further that  $v$ has no $10^+$-neighbour. It then follows that $v$ is a light vertex, and so each $x_i$ is heavy  by Proposition \ref{prop:light-heavy}, i.e., each $y_i$ is a $18^+$-vertex. By applying R7, $v$ gets totally at least $1$  from $y_1,y_2,\ldots,y_7$. Thus $\mu^*(v) \geq 0$ after  $v$ sends $1$  to each $x_i$ by R1 and at most $\frac{1}{2}$  to its $3$-neighbour by R3-R4.

Let $n_2(v)=8$. Since $v$ is a light vertex, each $x_i$ is a heavy vertex by Proposition \ref{prop:light-heavy}, and so each $y_i$ is a $18^+$-vertex. By applying R7, $v$ gets totally at least $1$  from $y_1,y_2,\ldots,y_8$.  Thus, $\mu^*(v)> 0$  after $v$ transfers $1$  to each $x_i$ by R1. \medskip

\textbf{(8).} Let $k=9 $. Observe that $\mu^*(v)\geq 0$ if $v$ has at most six $2$-neighbours by R1-R7. So, we may assume that $7\leq n_2(v)=t \leq 9$. Denote by $x_1,x_2,\ldots,x_t$ the $2$-neighbours of $v$, and let $y_i$ be the other neighbour of $x_i$ except for $v$. Assume that $r$ of $y_i$'s are $5(4)$-vertices for $r\in \{1,2,\ldots,t\}$. 

Let $n_2(v)=7$.  Denote by $z_1,z_2$ the $3^+$-neighbours of $v$. If $z_1,z_2$ are $10^+$-neighbours,  then each of them gives $1$  to $v$ by R7, and so  $\mu^*(v)\geq 0$ after  $v$ sends $1$  to each $x_i$ by R1 and at most $\frac{1}{4}$  to each $y_i$ by R4. We therefore assume that $v$ has no two $10^+$-neighbours. If $r\leq 4$, then $\mu^*(v)\geq 0$ after  $v$ sends $1$  to each $x_i$ by R1, at most $\frac{1}{2}$  to  each of $z_1,z_2$  by R6, and  $\frac{1}{8}$  to at most four of $y_i$'s by R5(a). If $r\geq  5$, then $v$ has a $9^+$-neighbour, say $z_1$. Thus,  $\mu^*(v)\geq 0$ after  $v$ sends $1$  to each $x_i$ by R1, at most $\frac{1}{2}$  to $z_2$ by R6, and  $\frac{1}{8}$  to each $y_i$ by R5(a).


Let $n_2(v)=8$. Then $\mu^*(v)\geq 0$ after  $v$ sends $1$  to each $x_i$ by R1, at most $\frac{1}{2}$  to its $3^+$-neighbour by R6.


Let $n_2(v)=9$. Since $v$ is a light vertex, each $x_i$ is heavy by Proposiiton \ref{prop:light-heavy}, and so each $y_i$ is a $17^+$-vertex. By applying R7, $v$ gets totally at least $1$  from $y_1,y_2,\ldots,y_9$.  Thus, $\mu^*(v)> 0$  after $v$ transfers $1$  to each $x_i$ by R1. \medskip

\textbf{(9).} Let $k\geq 10 $. By R7, $v$ transfers its positive charge equally to its each neighbour.   
Note that if two $10^+$-vertices $x,y$ are adjacent, then they receive at least $1$ charge  from each other, and they transfers this charge into the faces containing $xy$ by R8. 
Consequently, $\mu^*(v)\geq 0$.



\begin{thebibliography}{99}

\bibitem{bu-zu}
Bu, Y., Zhu, X., \textit{An optimal square coloring of planar graphs},  Journal of Combinatorial Optimization \textbf{24}  580–592, (2012).

\bibitem{bu-zu-2}
Bu Y, Zhu J.  \textit{Channel Assignment with r-Dynamic Coloring}, 12th International Conference, AAIM
2018, Dallas, TX, USA, December 3-4, Proceedings pp 36-48,(2018)


\bibitem{daniel}
Cranston, D. W. \textit{Coloring, List Coloring, and Painting Squares of Graphs (and other related problems)},   The Electronic Journal of Combinatorics, \textbf{30}, 2, DS25, (2022).


\bibitem{deniz-g6}
Deniz, Z., \textit{An improved bound for 2-distance coloring of planar graphs with girth six.} arXiv preprint arXiv: 2212.03831, (2022).


\bibitem{deniz-g5}
Deniz, Z., \textit{Some results on 2-distance coloring of planar graphs with girth five.} arXiv preprint arXiv: 2308.00390, (2023).


\bibitem{dong}
Dong, W.  and Lin, W., \textit{An improved bound on 2-distance coloring plane graphs with girth 5}.  Journal of Combinatorial Optimization, 32(2), 645-655, (2016).

\bibitem{dong-lin-2017}
Dong, W., and Lin, W., \textit{On 2-distance coloring of plane graphs with girth 5.}  Discrete Applied Mathematics, 217, 495-505, (2017).

\bibitem{hartke}
Hartke, S. G.,    Jahanbekam, S. and  Thomas, B.,  \textit{The chromatic number of the square of subcubic planar graphs}. arXiv preprint arXiv:1604.06504, (2016). 

\bibitem{van-den}
Heuvel, J. van den,  McGuinness,  S., \textit{Coloring of the square of planar graph},  Journal of Graph Theory, 42  110–124, (2003).

\bibitem{jin-miao-2022}
Jin, Y. D., and Miao, L. Y., \textit{List 2-distance Coloring of Planar Graphs with Girth Five}.  Acta Mathematicae Applicatae Sinica, English Series, 38(3), 540-548, (2022).

\bibitem{la-2021}
La, H., \textit{2-distance list ($\Delta+3$)-coloring of sparse graphs}. arXiv:2105.01684,  (2021).

\bibitem{la-mont-2022}
La, H., and Montassier, M. \textit{2-Distance list $(\D+ 2)$-coloring of planar graphs with girth at least 10}. Journal of Combinatorial Optimization, 44(2), 1356-1375, 2022.



\bibitem{molloy}
 Molloy, M. and Salavatipour, M. R., \textit{ A bound on the chromatic number of the square
of a planar graph},  Journal of Combinatorial Theory Series B, 94,  189–213, (2005).

\bibitem{thomassen}
Thomassen, C., \textit{The square of a planar cubic graph is 7-colorable.} Journal of Combinatorial Theory Series B, 128:192–218, (2018).


\bibitem{wegner}
Wegner, G., \textit{Graphs with given diameter and a coloring problem}. Technical report, University of Dormund,
(1977).


\bibitem{west}
D. B. West. Introduction to graph theory, volume 2. Prentice hall Upper Saddle River, 2001.











\end{thebibliography}
\end{document}